\renewcommand{\le}{\leqslant}
\renewcommand{\ge}{\geqslant}
\theoremstyle{plain}
\newtheorem{thm}{\bfseries Theorem}[section]
\newtheorem{lem}[thm]{\bfseries Lemma}
\newtheorem{prop}[thm]{\bfseries Proposition}
\theoremstyle{remark}
\newtheorem{example}[thm]{\bfseries Example}
\newtheorem{rem}[thm]{\bfseries Remark}
\newcommand{\iso}{\cong}
\DeclareMathSymbol{\Z}{\mathalpha}{AMSb}{"5A} 
\DeclareMathSymbol{\PP}{\mathalpha}{AMSb}{"50} 
\DeclareMathSymbol{\Q}{\mathalpha}{AMSb}{"51}
\DeclareMathSymbol{\N}{\mathalpha}{AMSb}{"4E}
\DeclareMathSymbol{\R}{\mathalpha}{AMSb}{"52}
\renewcommand{\le}{\leqslant}
\renewcommand{\ge}{\geqslant}
\newcommand{\ga}{\gamma}
\newcommand{\al}{\alpha}
\newcommand{\be}{\beta}
\newcommand{\im}{\mathrm{Im}}
\newcommand{\rank}{\mathrm{rank}}
\definecolor{mongris}{gray}{0.9}
\renewcommand{\bf}{\bfseries}
\renewcommand{\it}{\itshape}
\newcommand{\Vor}{{\mathrm Vor}}
\def \Id {{\rm Id}}
\def \wtA {{\tilde A}}
\def \wtS {{\tilde S}}
\def \Vor{{\rm Vor}}
\def \Id{{\rm Id}} 
\def \St{{S\hskip-1.5pt t}}
\newcommand{\GL}{\mathit{GL}}
\newcommand{\SL}{\mathit{SL}}
\definecolor{mongris}{rgb}{0.9, 0.9, .9}
\newcommand{\graycell}[1]{\multicolumn{1}{|>{\columncolor{mongris}}c|}{#1}}
\renewcommand{\geq}{\geqslant}
\renewcommand{\leq}{\leqslant}
\newcommand{\cS}{\mathcal{S}}
\begin{document}
\begin{flushleft}{\bf \small }  
\end{flushleft}
\title{Perfect forms and the cohomology of modular groups} 
\subjclass{11H55,11F75,11F06,11Y99,55N91, 20J06,57-04}
\keywords{Perfect forms,  Vorono\"{\i} complex, group cohomology, modular groups, machine calculations}
\author{Philippe Elbaz-Vincent}
\address{
Institut Fourier, UMR 5582 (CNRS-Universit\'e Grenoble 1)\\
100 rue des Math\'ematiques\\
Domaine Universitaire\\
BP 74\\
38402 Saint Martin  d'H\`eres (France)
}
\email{Philippe.Elbaz-Vincent@ujf-grenoble.fr}
\author{Herbert Gangl}
\address{Department of Mathematical Sciences\\
South Road\\
University of Durham (United Kingdom)}
\email{herbert.gangl@durham.ac.uk}
\author{Christophe Soul\'e}
\address{IHES\\
Le Bois-Marie 35\\ 
Route de Chartres\\
91440  Bures-sur-Yvette (France)}
\email{soule@ihes.fr}
\maketitle
\tableofcontents
\markboth{{\sc Ph. Elbaz-Vincent, H. Gangl and C. Soul\'e}}{{\sc Perfect forms and the cohomology of modular groups}}
\begin{abstract} 
For $N=5$, 6 and 7, using the classification of perfect quadratic forms, 
we compute the homology of the Vorono\"{\i} cell complexes
attached to the modular groups $\SL_N(\Z)$ and $GL_N(\Z)$.
From this we deduce the rational
cohomology of those groups.
\end{abstract}
 
 \section{Introduction} Let $N\geq 1$ be an integer and let $\SL_N(\Z)$ be the modular group of integral matrices
 with determinant one. Our goal is to compute its cohomology groups with trivial coefficients, i.e. $H^q\big(\SL_N(\Z),\Z)$.
 The case $N=2$ is well-known and follows from the fact that $\SL_2(\Z)$ is the amalgamated product of two finite cyclic groups
 (\cite{Serre}, \cite{B}, II.7, Ex.3, p.52). The case $N=3$ was done in \cite{Soule-SL3}: for any $q>0$ the
 group $H^q\big(\SL_3(\Z),\Z\big)$ is killed by 12. 
 The case $N=4$ has been studied by Lee and Szczarba in \cite{LS}: modulo 2, 3 and 5--torsion, the cohomology group 
 $H^q\big(\SL_4(\Z),\Z\big)$ is trivial whenever $q>0$, except that $H^3\big(\SL_4(\Z),\Z\big) = \Z$. In Theorem 
\ref{cohomology} below, we solve the cases $N=5$, 6 and 7.

For these calculations we follow the method of \cite{LS}, i.e. we use the perfect forms of Vorono\"{\i}. Recall
from \cite{Vo} and \cite{martinet} that a perfect form in $N$ variables is a positive definite real quadratic
form $h$ on $\R^N$ which is uniquely determined (up to a scalar) by its set of integral minimal vectors.
Vorono\"{\i} proved in \cite{Vo} that there are finitely many perfect forms of rank $N$, modulo the action of $\SL_N(\Z)$.
These are known for $N\leq 8$ (see \S\ref{sec1} below).

 Vorono\"{\i} used perfect forms to define a cell decomposition of the space $X_N^*$ of positive real quadratic forms, the kernel
 of which is defined over $\Q$. This cell decomposition (cf. \S\ref{sec2}) is invariant under $\SL_N(\Z)$, hence it can be used
 to compute the equivariant homology of $X_N^*$ modulo its boundary. On the other hand, this equivariant homology
turns out to be isomorphic to the groups $H_q\big(\SL_N(\Z), \St_N\big)$, where $\St_N$ is the Steinberg module
(see \cite{BS} and \S\ref{ssec2.4} below).
Finally, Borel--Serre duality \cite{BS} asserts that the homology $H_*\big(\SL_N(\Z), \St_N\big)$ is dual to the cohomology $H^*\big(\SL_N(\Z),\Z\big)$ (modulo 
torsion).

To perform these computations for $N\leq 7$, we needed the help of a computer. The reason is that the Vorono\"{\i} cell 
decomposition of $X_N^*$ gets soon very complicated when $N$ increases. For instance, when $N=7$, there are more than
two million orbits of cells of dimension 18, modulo the action of $\SL_N(\Z)$ (see Figure~2 below). 
For this purpose, we have developed a C library \cite{PFPK}, which uses PARI \cite{PARI2} for some functionalities. The algorithms are based on exact methods.
As a result we get the full Vorono\"{i} cell decomposition of the spaces $X^*_N$ for $N\le 7$ (with either $GL_N(\Z)$ or $SL_N(\Z)$ action). Those decompositions are summarized in the figures and tables below. 
The computations were done on several computers using different processor architectures (which is useful for checking the results) and for $N=7$ the overall computational time 
was more than a year.

\smallskip
The paper is organized as follows. In \S2, we recall the Vorono\"{\i} theory of perfect forms. In \S3, we introduce a complex
of abelian groups that we call the ``Vorono\"{\i} complex'' which computes the homology groups 
$H_q\big(\SL_N(\Z),\St_N\big)$. In \S4, we explain how to get an explicit description of the Vorono\"{\i} complex in rank
$N=5$, 6 or 7, starting from the description of perfect forms available in the literature (especially in the
work of Jaquet \cite{jaquet}). In Figures~1 and 2 we display the rank of the groups in the Vorono\"{\i} complex and in Tables~1--5
we give the elementary divisors of its differentials. The homology of the Vorono\"{\i} complex (hence the groups $H_q(\SL_N(\Z),\St_N)$\,) follows from this. It is given in Theorem \ref{homology_voronoi}.

We found two methods to test whether our computations are correct. First, checking that the virtual Euler characteristic
of $\SL_N(\Z)$ vanishes leads to a mass formula for the orders of the stabilizers of the cells of $X_N^*$ (cf.~\S\ref{massformula}).
Second, the identity $d_{n-1} \circ d_n = 0$ for the differentials in the Vorono\"{\i} complex is a non-trivial equality
when these differentials are written as explicit (large) matrices.

In \S5 we give an explicit formula for the top homology group of the Vorono\"{\i} complex (Theorem \ref{explicithomclass}).
In \S6 we prove that the Vorono\"{\i} complex of $GL_N(\Z)$ is a direct factor of the Vorono\"{\i} complex of $GL_6(\Z)$
shifted by one. Finally, in \S7 we explain how to compute the cohomology of $\SL_N(\Z)$ and $GL_N(\Z)$
(modulo torsion) from our results on the homology of the Vorono\"{i} complex in \S4. Our main result is stated in Theorem
\ref{cohomology}.

\smallskip
{\bf Acknowledgments:} The first two authors are particularly indebted to the IHES for its hospitality.
The second author thanks the Institute for Experimental Mathematics in Essen,
acknowledging financial support by the DFG and the European Commission as well as hospitality of the 
Newton Institute in Cambridge and the MPI for Mathematics in Bonn.
The authors are grateful to B. Allombert, J.-G.~Dumas,  D.-O.~Jaquet, J.-C. K\"onig, J.~Martinet, S.~Morita, J-P.~Serre and B.~Souvignier for helpful discussions. The computations of the Vorono\"{i} cell decomposition were performed on the computers  of the Institut de Math\'ematiques et Mod\'elisation de Montpellier, the MPI Bonn,  the Institut Fourier in Grenoble and the Centre de Calcul M\'edicis and we are grateful to those institutions.

\smallskip 
{\it Notation:}
\noindent For any positive integer $n$ we let ${\mathcal S}_n$ be the class of finite abelian groups the order of which has only prime factors less than or equal to $n$. 

\newpage

\section{Vorono{\"\i}'s reduction theory}\label{sec1}

\subsection{ Perfect forms. }\label{ssec1.1} Let $N \geq 2$ be an integer. We let $C_N$ be the set of positive definite real quadratic forms in $N$ variables. Given $h \in C_N$, let $m(h)$ be the finite set of minimal vectors of $h$, i.e. vectors $v \in {\mathbb Z}^N$, $v \ne 0$, such that $h(v)$ is minimal. A form $h$ is called {\it perfect} when $m(h)$ determines $h$ up to scalar: if $h' \in C_N$ is such that $m(h') = m(h)$, then $h'$ is proportional to $h$. 
\begin{example}
 The form $h(x,y)=x^2+y^2$ has minimum 1 and precisely 4~minimal vectors $\pm (1,0)$ and $\pm (0,1)$. This form is not perfect, because there is
 an infinite number of positive definite quadratic forms having these minimal vectors, namely the forms $h(x,y)=x^2+axy+y^2$
where $a$ is a non-negative real number less than 1.
By contrast, the form $h(x,y)=x^2+xy+y^2$ has also minimum 1 and has exactly 6 minimal vectors, viz.~the ones above and $\pm (1,-1)$. This form is perfect, the associated lattice is the ``honeycomb lattice''.
\end{example}

 Denote by $C_N^*$ the set of non-negative real quadratic forms on ${\mathbb R}^N$ the kernel of which is spanned by a proper linear subspace of ${\mathbb Q}^N$, by $X_N^*$ the quotient of $C_N^*$ by positive real homotheties, and by $\pi : C_N^* \to X_N^*$ the projection. Let $X_N = \pi (C_N)$ and $\partial X_N^* = X_N^* - X_N$. Let $\Gamma$ be either $GL_N ({\mathbb Z})$ or $\SL_N ({\mathbb Z})$. The group $\Gamma$ acts on $C_N^*$ and $X_N^*$ on the right by the formula
$$
h \cdot \gamma = \gamma^t \, h \, \gamma \, , \quad \gamma \in \Gamma \, , \ h \in C_N^* \, ,
$$
where $h$ is viewed as a symmetric matrix and $\gamma^t$ is the transpose of the matrix $\gamma$.
Vorono{\"\i} proved that there are only finitely many perfect forms modulo the action of $\Gamma$ and multiplication by positive real numbers (\cite{Vo}, Thm.~p.110).\\
The following table gives the current state of the art on the enumeration of perfect forms.\\
{\small
\[
\setlength{\arraycolsep}{3pt}
\setlength\extrarowheight{2pt}
\begin{array}{|c|c|c|c|c|c|c|c|c|c|}
\hline
 \mathrm{rank}  & 1 & 2 & 3 & 4 & 5 & 6 & 7 & 8  & 9 \\
\hline
\# \mathrm{classes} & 1 & 1 & 1 & 2 & 3 & 7& 33 & 10916 & \geqslant 500000  \\
\hline
\end{array}
\]
}

\medskip
The classification of perfect forms of rank 8 was achieved by Dutour, Sch\"urmann and \nobreak{Vallentin} in 2005 \cite{dsv}, \cite{Sch}.
They have also shown that in rank 9 there are at least 500000 classes of perfect forms. The corresponding classification 
for rank~7 was completed by Jaquet in 1991 \cite{jaquet}, for rank 6 by Barnes \cite{barnes}, and by Vorono\"{\i} for the other dimensions. 
We refer to the book of Martinet \cite{martinet} for more details on the results up to rank 7.

\subsection{ A cell complex}\label{ssec1.2} Given $v \in {\mathbb Z}^N - \{ 0 \}$ we let $\hat v \in C_N^*$ be the form defined by
$$
\hat v (x) = (v \mid x)^2 \, , \ x \in {\mathbb R}^N \, ,
$$
where $(v \mid x)$ is the scalar product of $v$ and $x$. The {\it convex hull in }$X_N^*$ of a finite subset $B \subset {\mathbb Z}^N - \{ \mathbf 0 \}$ is the subset of $X_N^*$ which is the image under $\pi$ of the quadratic forms $\underset{j}{\sum} \, \lambda_j \, \hat{v_j}\in C_N^*$, where $v_j \in B$ and $\lambda_j \geq 0$. For any perfect form $h$, we let $\sigma (h) \subset X_N^*$ be the convex hull of the set $m(h)$ of its minimal vectors. Vorono{\"\i} proved in \cite{Vo}, \S\S8-15, that the cells $\sigma (h)$ and their intersections, as $h$ runs over all perfect forms, define a cell decomposition of $X_N^*$, which is invariant under the action of $\Gamma$. We endow $X_N^*$ with the corresponding $CW$-topology. If $\tau$ is a closed cell in $X_N^*$ and $h$ a perfect form with $\tau \subset \sigma (h)$, we let $m(\tau)$ be the set of vectors $v$ in $m(h)$ such that $\hat v$ lies in $\tau$. Any closed cell $\tau$ is the convex hull of $m(\tau)$, and for any two closed cells $\tau$, $\tau'$ in $X_N^*$ we have  $m(\tau) \cap m(\tau') = m (\tau \cap \tau')$.

\section{The Vorono{\"\i} complex}\label{sec2}

\subsection{Definition }\label{ssec2.1} Let $d(N) = N(N+1)/2-1$ be the dimension of $X_N^*$ and $n \leq d(N)$ a natural integer. We denote by $\Sigma_n^\star= \Sigma_n^\star(\Gamma)$ a set of representatives, modulo the action of $\Gamma$, of those cells of dimension $n$ in $X_N^*$ which meet $X_N$, and by $\Sigma_n =\Sigma_n(\Gamma) \subset \Sigma_n^\star(\Gamma)$ the cells $\sigma$ for which any element of the stabilizer $\Gamma_{\sigma}$ 
of $\sigma$ in $\Gamma$ preserves the orientation. Let $V_n$ be the free abelian group generated by $\Sigma_n$. We define as follows a map
$$
d_n : V_n \to V_{n-1} \, .
$$

For each closed cell $\sigma$ in $X_N^*$ we fix an orientation of $\sigma$, i.e. an orientation of the real vector space ${\mathbb R} (\sigma)$ of symmetric matrices spanned by the forms $\hat v$ with $v \in m(\sigma)$. Let $\sigma \in \Sigma_n$ and let $\tau'$ be a face of $\sigma$ which is equivalent under $\Gamma$ to an element in $\Sigma_{n-1}$ (i.e. $\tau'$ neither lies on the boundary nor has elements in its stabilizer reversing the orientation). Given a positive basis $B'$ of ${\mathbb R} (\tau')$ we get a basis $B$ of ${\mathbb R} (\sigma)\supset {\mathbb R} (\tau')$ by appending to $B'$ a vector $\hat v$, where $v \in m(\sigma) - m(\tau')$. We let $\varepsilon (\tau' , \sigma) = \pm 1$ be the sign of the orientation of $B$ in the oriented vector space ${\mathbb R} (\sigma)$ (this sign does not depend on the choice of $v$).

\smallskip

Next, let $\tau \in \Sigma_{n-1}$ be the (unique) cell equivalent to $\tau'$ and let $\gamma\in\Gamma$ be such that $\tau'= \tau \cdot \gamma$. We define $\eta (\tau , \tau') = 1$ (resp. $\eta (\tau , \tau') = -1$) when $\gamma$ is compatible (resp. incompatible) with the chosen orientations of ${\mathbb R} (\tau)$ and ${\mathbb R} (\tau')$.

\smallskip

Finally we define
\begin{equation}
\label{eq1}
d_n (\sigma) = \sum_{\tau \in \Sigma_{n-1}} \sum_{\tau'} \eta (\tau , \tau') \, \varepsilon (\tau' , \sigma) \, \tau \, ,
\end{equation}
where $\tau'$ runs through the set of faces of $\sigma$ which are equivalent to $\tau$.

\subsection{ A spectral sequence }\label{ssec2.2} According to \cite{B}, VII.7, there is a spectral sequence $E_{pq}^r$ converging to the equivariant homology groups $H_{p+q}^{\Gamma} (X_N^* , \partial X_N^* ; {\mathbb Z})$ of the homology pair $(X_N^* , \partial X_N^*)$, and such that 
$$
E_{pq}^1 = \bigoplus_{\sigma \in \Sigma_p^\star} H_q (\Gamma_{\sigma} , {\mathbb Z}_{\sigma}) \, ,
$$
where ${\mathbb Z}_{\sigma}$ is the orientation module of the cell $\sigma$ and, as above, $\Sigma_p^\star$ is a set of representatives, modulo $\Gamma$, of the $p$-cells $\sigma$ in $X_N^*$ which meet $X_N$. Since $\sigma$ meets $X_N$, its stabilizer $\Gamma_{\sigma}$ is finite and, by Lemma \ref{lemma1} in \S\ref{sec4} below, the order of $\Gamma_{\sigma}$ is divisible only by primes $p \leq N+1$. Therefore, when $q$ is positive, the group $H_q (\Gamma_{\sigma} , {\mathbb Z}_{\sigma})$ lies in ${\mathcal S}_{N+1}$. 

When $\Gamma_{\sigma}$ happens to contain an element which changes the orientation of $\sigma$, the group $H_0 (\Gamma_{\sigma} , {\mathbb Z}_{\sigma})$ is killed by $2$, otherwise  $H_0 (\Gamma_{\sigma} , {\mathbb Z}_{\sigma})\cong {\mathbb Z}_{\sigma})$.
Therefore, modulo ${\mathcal S}_2$, we have
$$
E_{n\,0}^1 = \bigoplus_{\sigma \in \Sigma_n} {\mathbb Z}_{\sigma} \, ,
$$
and the choice of an orientation for each cell $\sigma$ gives an isomorphism between $E_{n\,0}^1$ and $V_n$.

\subsection{Comparison} We claim that the differential
$$
d_n^1 : E_{n\,0}^1 \to E_{n-1,0}^1
$$
coincides, 
up to sign, with the map $d_n$ defined in \ref{ssec2.1}. According to \cite{B}, VII, Prop.~(8.1), the differential $d_n^1$ can be described as follows.

\smallskip

\smallskip
Let $\sigma \in \Sigma_n^\star$ and let $\tau'$ be a face of $\sigma$. Consider the group $\Gamma_{\sigma \tau'} = \Gamma_{\sigma} \cap \Gamma_{\tau'}$ and denote by
$$
t_{\sigma \tau'} : H_* (\Gamma_{\sigma} , {\mathbb Z}_{\sigma}) \to H_* (\Gamma_{\sigma \tau'} , {\mathbb Z}_{\sigma})
$$
the transfer map. Next, let
$$
u_{\sigma \tau'} : H_* (\Gamma_{\sigma\tau'} , {\mathbb Z}_{\sigma}) \to H_* (\Gamma_{\tau'} , {\mathbb Z}_{\tau'})
$$
be the map induced by the natural map 
${\mathbb Z}_{\sigma} \to {\mathbb Z}_{\tau'}$, together with the inclusion $\Gamma_{\sigma\tau'} \subset \Gamma_{\tau'}$. Finally, let $\tau \in \Sigma_{n-1}^\star$ be the representative of the $\Gamma$-orbit of $\tau'$, let $\gamma \in \Gamma$ be such that $\tau' = \tau \cdot \gamma$, and let
$$
v_{\tau'\tau} : H_* (\Gamma_{\tau'} , {\mathbb Z}_{\tau'}) \to H_* (\Gamma_{\tau} , {\mathbb Z}_{\tau})
$$

\smallskip\noindent
be the isomorphism induced by $\gamma$. Then the restriction of $d_n^1$ to $H_* (\Gamma_{\sigma} , {\mathbb Z}_{\sigma})$ is equal, up to sign, to the sum
\begin{equation}
\label{eq2}
\sum_{\tau'} v_{\tau'\tau} \, u_{\sigma \tau'} \, t_{\sigma \tau'} \, ,
\end{equation}
where $\tau'$ runs over a set of representatives of faces of $\sigma$ modulo $\Gamma_{\sigma}$.

\smallskip

To compare $d_n^1$ with $d_n$ we first note that, when $\tau \in \Sigma_{n-1}$, 
$$
v_{\tau'\tau} : H_0 (\Gamma_{\tau'} , {\mathbb Z}_{\tau'}) = {\mathbb Z} \to H_0 (\Gamma_{\tau} , {\mathbb Z}_{\tau}) = {\mathbb Z}
$$
is the multiplication by $\eta (\tau,\tau')$, as defined in \S\ref{ssec2.1}. Next, when $\sigma \in \Sigma_n$, the map
$$
u_{\sigma\tau'} : H_0 (\Gamma_{\sigma\tau'} , {\mathbb Z}_{\sigma}) = {\mathbb Z}_{\sigma} = {\mathbb Z} \to H_0 (\Gamma_{\tau'} , {\mathbb Z}_{\tau'}) = {\mathbb Z}
$$
is the multiplication by $\varepsilon (\tau',\sigma)$, up to a sign depending on $n$ only. Finally, the transfer map
$$
t_{\sigma\tau'} : H_0 (\Gamma_{\sigma} , {\mathbb Z}_{\sigma}) = {\mathbb Z} \to H_0 (\Gamma_{\sigma\tau'} , {\mathbb Z}_{\sigma}) = {\mathbb Z}
$$
is the multiplication by $[\Gamma_{\sigma} : \Gamma_{\sigma\tau'}]$. Multiplying the sum (\ref{eq2}) by this number amounts  to the same as taking the sum over all faces of $\sigma$ as in (\ref{eq1}). This proves that $d_n$ coincides, up to sign, with $d_n^1$ on $E_{n\,0}^1 = V_n$. \qed

In particular, we get that $d_{n-1} \circ d_n = 0$.  Note that this identity will give us a non-trivial test of our explicit 
computations of the  complex.


\smallskip
{\it Notation:}
\noindent The resulting complex  $(V_\bullet,d_\bullet)$ will be denoted by $\Vor_\Gamma$, and we call it the {\em Vorono\"i complex}.

\subsection{The Steinberg module }\label{ssec2.4} Let $T_N$ be the spherical Tits building of $\SL_N$ over ${\mathbb Q}$, i.e. the simplicial set defined by the ordered set of non-zero proper linear subspaces of ${\mathbb Q}^N$. The reduced homology $\tilde H_q (T_N,\Z)$ of $T_N$ with integral coefficients is zero except when $q = N-2$, in which case
$$
\tilde H_{N-2} (T_N,\Z) = {\rm St}_N
$$
is by definition the Steinberg module \cite{BS}. According to \cite{SouleK4}, Prop.~1, the relative homology groups $H_q (X_N^* , \partial X_N^* ; {\mathbb Z})$ are zero except when $q = N-1$, and
$$
H_{N-1} (X_N^* , \partial X_N^* ; {\mathbb Z}) = {\rm St}_N \, .
$$
From this it follows that, for all $m \in {\mathbb N}$,
$$
H_m^{\Gamma} (X_N^* , \partial X_N^* ; {\mathbb Z}) = H_{m-N+1} (\Gamma , {\rm St}_N)
$$
(see e.g. \cite{SouleK4}, \S3.1). Combining this equality with the previous sections we conclude that, modulo ${\mathcal S}_{N+1}$,
\begin{equation}
\label{eq3}
H_{m-N+1} (\Gamma , {\rm St}_N) = H_m (\Vor_\Gamma) \, .
\end{equation}
\section{The Vorono\"{\i} complex in dimensions 5, 6 and 7}
\noindent In this section, we explain how to compute the Vorono\"{\i} complexes of rank $N\leq 7$.
\subsection{Checking the equivalence of cells}\label{equiv_cells}
As a preliminary step, we develop an effective method to check whether two cells $\sigma$ and $\sigma'$ of the same dimension are equivalent under the
action of $\Gamma$. The cell $\sigma$ (resp. $\sigma'$) is described by its set of minimal vectors $m(\sigma)$ (resp. $m(\sigma')$).
We let $b$ (resp. $b'$) be the sum of the forms $\hat v$ with $v\in m(\sigma)$ (resp. $m(\sigma')$).
If $\sigma$ and $\sigma'$ are equivalent under the action of $\Gamma$ the same is true for $b$ and $b'$,
and the converse holds true since two cells of the same dimension are equal when they have an interior point in common. 

To compare $b$ and $b'$ we first check whether or not they have the same  determinant. In case they do,
we let $M$ (resp. $M'$) be the set of numbers $b(x)$ with $ x\in m(\sigma)$ (resp.
$b'(x)$ with $x\in m(\sigma')$). If  $b$ and $b'$ are equivalent, then the sets $M$ and $M'$
must be equal. 

Finally, if $M=M'$ we check if $b$ and $b'$ are equivalent by applying an algorithm of Plesken and Souvignier \cite{souvignier}
(based on an implementation of Souvignier).

\subsection{Finding generators of the Vorono\"{\i} complex} 
In order to compute $\Sigma_n$ (and $\Sigma_n^\star$), we proceed as follows. Fix $N\leq 7$.
Let ${\mathcal P}$ be a set of representatives of the perfect forms of rank $N$.
A choice of ${\mathcal P}$ is provided by Jaquet \cite{jaquet}.
Furthermore, for each $h\in{\mathcal P}$, Jaquet  gives the list $m(h)$ of its
minimal vectors, and the list of all perfect forms $h'\gamma$ (one for each orbit under 
$\Gamma_{\sigma(h)}$), where $h'\in{\mathcal P}$ and $\gamma\in \Gamma$,
such that $\sigma(h)$ and $\sigma(h')\gamma$ share a face of
codimension one. 
This provides a complete list ${\mathcal C}_{h}^1$ of representatives
of codimension one faces in $\sigma(h)$. 

From this, one deduces the full list ${\mathcal F}_{h}^1$ of faces of codimension one
in $\sigma(h)$ as follows: first list all the elements in the automorphism group 
$\Gamma_{\sigma(h)}$; this can be obtained by using a second procedure implemented by
Souvignier \cite{souvignier} which gives generators for $\Gamma_{\sigma(h)}$. We represent the latter
generators as elements in the symmetric group ${\mathfrak S}_M$, where $M$ is the cardinality of
$m(h)$, 
acting on set $m(h)$ of minimal vectors. Using those generators, we let GAP \cite{GAP} list all the elements
of $\Gamma_{\sigma(h)}$, viewed as elements of the symmetric group above.

The next step is to create a shortlist ${\mathcal F}_{h}^2$ of codimension~2 facets of 
$\sigma(h)$ by intersecting
all the translates under ${\mathfrak S}_M$ of codimension~1 facets with 
each member of ${\mathcal C}_{h}^1$ and only keeping those intersections with the correct
rank (=$d(N)-2$). The resulting shortlist is reasonably small and we apply 
the procedure of \ref{equiv_cells} to reduce the shortlist to a set of representatives 
${\mathcal C}_{h}^2$  of codimension~2 facets.

\smallskip We then proceed by induction on the codimension to define a list ${\mathcal F}_{h}^p$
of cells of codimension $p>2$ in $\sigma(h)$. Given ${\mathcal F}_{h}^p
$, we let ${\mathcal C}_{h}^p \subset {\mathcal F}_{h}^p$ be a set of representatives 
for the action of $\Gamma$. We then let ${\mathcal F}_{h}^{p+1}$ be the set of cells
$\varphi \cap \tau$, with $\varphi \in {\mathcal F}_{h}^2$, and $\tau \in {\mathcal C}_{h}^p$.

\smallskip
Next, we let $\Sigma_n^\star$ be a system of representatives modulo $\Gamma$ 
in the union of the sets ${\mathcal C}_{h}^{d(N)-n}, h\in {\mathcal P}$.
We then compute generators of the stabilizer of each cell in $\Sigma_n^\star$
with the help of another algorithm developed by Plesken and Souvignier in \cite{souvignier},
and we check whether all generators preserve the orientation of the cell.
This gives us the set $\Sigma_n$  as the set of those cells which
pass that check.

\quad\\[5pt]
\begin{figure*}[t]
{\fontsize{7}{8}
\[
\setlength{\arraycolsep}{5pt}
\setlength\extrarowheight{2pt}
\begin{array}{|c|c|c|c|c|c|c|c|c|c|c|c|c|c|c|c|c|c|}
\hline
\graycell{\mathbf{n}}  & \graycell{4}  & \graycell{5} & \graycell{6} & \graycell{7} & \graycell{8} & \graycell{9} & \graycell{10} & \graycell{11} & \graycell{12} & \graycell{13} & 
\graycell{14} & \graycell{15} & \graycell{16} & \graycell{17} & \graycell{18} &\graycell{19} & \graycell{20}  \\
\hline
\Sigma_n^\star\big( GL_5(\Z)\big) & 2 & 5& 10& 16& 23& 25& 23& 16& 9& 4& 3 & & & &&& \\
\hline
\Sigma_n\, \big(GL_5(\Z)\big) &  & & &  & 1 & 7 & 6 & 1& 0 & 2 & 3  & & & & & & \\[1pt]
\hline
\Sigma_n^\star\big(GL_6(\Z)\big) &   &  3 & 10& 28& 71& 162& 329& 589&  874& 1066& 1039& 775& 425& 181& 57& 18& 7 \\
\hline
\Sigma_n\,\big( GL_6(\Z)\big)&  &  & & &  & 3 & 46 & 163 & 340 & 544 & 636 & 469 & 200 & 49 & 5 &  & \\[1pt]
\hline
\Sigma_n^\star\big(\SL_6(\Z) \big)&  &  3 & 10 & 28& 71 & 163 & 347 & 691 & 1152 & 1532 & 1551 
& 1134 & 585 & 222 & 62 & 18 & 7\\
\hline
\Sigma_n\,\big(\SL_6(\Z)\big) &  &  & 3 & 10 & 18 & 43 & 169 & 460 & 815 & 1132 & 1270 
& 970 & 434 & 114 & 27 & 14 & 7\\[1pt]
\noalign{\hrule height 0.4pt} 

\end{array}
\]
}\caption{Cardinality of {$\Sigma_n$} and  {$\Sigma_n^\star$} for $N=5,6$ (empty
slots denote zero).}\label{fig:sig56}
\end{figure*}

\begin{figure*}[t]
{\fontsize{7}{8}
\[
\setlength{\arraycolsep}{5pt}
\setlength{\extrarowheight}{4pt}
\begin{array}{|c|c|c|c|c|c|c|c|c|c|c|c|}
\hline
\graycell{\mathbf{n}} &  \graycell{6} & \graycell{7} & \graycell{8} & \graycell{9} & \graycell{10} & \graycell{11} & \graycell{12 } & \graycell{13}& \graycell{14} & \graycell{15} &  \graycell{16}\\
\hline
\Sigma_n^\star &{6}& {28} & {115} & {467} & {1882} & {7375} & {26885} & {87400}& {244029} & {569568}&  {{1089356} }\\
\hline
\Sigma_n &  & & & {1} & {60} & {1019} & {8899}& {47271}& {171375} & {460261}&  {{955128}} \\
\hline
\hline
\graycell{\mathbf{n}}   & \graycell{17} & \graycell{18}& \graycell{19}& \graycell{20}& \graycell{21}& \graycell{22 }& \graycell{23} & \graycell{24 }& \graycell{25 }& \graycell{26} & \graycell{27}\\ 
\hline
\Sigma_n^\star & {{1683368} }&  {{2075982} }& {{2017914} }&{{1523376}}& {{876385}}&  {374826}&
 {115411}& {24623}& {3518}& {352}& 33\\
\hline
\Sigma_n & {{1548650}} &  {{1955309}} & {{1911130}}&{{1437547}}& {{822922}} & {349443}& 
 {105054}& {21074}& {2798}& {305}& 33\\[1pt]
\noalign{\hrule height 0.4pt} 
\end{array}
\]
}\caption{Cardinality of {$\Sigma_n$} and  {$\Sigma_n^\star$} for $GL_7(\Z)$ .}\label{fig:sig7}
\end{figure*}

\begin{prop}\label{cardinality} 
The cardinality of {$\Sigma_n$} and {$\Sigma_n^\star$} is displayed in Figure
\ref{fig:sig56} for rank $N=5,6$ and  in Figure \ref{fig:sig7} for rank $N=7$.
\end{prop}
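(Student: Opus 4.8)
\emph{Proof proposal.} The plan is to realize explicitly the inductive construction of the sets $\Sigma_n^\star(\Gamma)$ and $\Sigma_n(\Gamma)$ sketched in \S\ref{equiv_cells} and \S4.2, starting from Jaquet's classification \cite{jaquet} of the perfect forms of rank $N \le 7$, and to run the resulting face enumeration on a computer. First I would fix, for each $N \in \{5,6,7\}$, a set $\mathcal P$ of representatives of the perfect forms, using for every $h \in \mathcal P$ the data supplied by Jaquet: the list $m(h)$ of minimal vectors and the list of perfect forms $h'\gamma$ (one per $\Gamma_{\sigma(h)}$-orbit) such that $\sigma(h)$ and $\sigma(h')\gamma$ share a codimension-one face. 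This gives directly the list $\mathcal C_h^1$ of representatives of codimension-one faces of $\sigma(h)$; listing all elements of the finite group $\Gamma_{\sigma(h)}$ with GAP \cite{GAP} (from generators produced by the Plesken--Souvignier procedure \cite{souvignier}, viewed inside $\mathfrak S_M$ with $M = \#m(h)$) then yields the full set $\mathcal F_h^1$. Intersecting translates of codimension-one faces with the members of $\mathcal C_h^1$ and keeping only the intersections of rank $d(N)-2$ gives a shortlist $\mathcal F_h^2$, reduced to representatives $\mathcal C_h^2$ by the equivalence test of \S\ref{equiv_cells}; iterating the step $\mathcal F_h^{p+1} = \{\varphi \cap \tau : \varphi \in \mathcal F_h^2,\ \tau \in \mathcal C_h^p\}$ produces all faces of each $\sigma(h)$ by downward induction on the codimension.

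The second step assembles $\Sigma_n^\star$ and $\Sigma_n$. I would let $\Sigma_n^\star$ be a set of representatives, modulo $\Gamma$, inside the union of the $\mathcal C_h^{d(N)-n}$ over $h \in \mathcal P$. The equivalence of two cells $\sigma$, $\sigma'$ of the same dimension is decided, as in \S\ref{equiv_cells}, by comparing the barycentric forms $b = \sum_{v\in m(\sigma)}\hat v$ and $b' = \sum_{v\in m(\sigma')}\hat v$: first their determinants, then the multisets of values $\{b(x): x\in m(\sigma)\}$ and $\{b'(x): x\in m(\sigma')\}$, and, if these coincide, a call to the Plesken--Souvignier lattice-isomorphism algorithm \cite{souvignier}; this is correct because two cells of equal dimension with a common interior point are equal. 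For each cell in $\Sigma_n^\star$ one computes stabilizer generators (again via \cite{souvignier}) and keeps the cell in $\Sigma_n$ precisely when every generator preserves the chosen orientation of ${\mathbb R}(\sigma)$. Counting the representatives in each dimension yields the entries of Figures~\ref{fig:sig56} and \ref{fig:sig7}.

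Since these numbers are the output of a very large computation --- more than a year of CPU time for $N=7$, whose dimension-18 stratum alone has over two million $\Gamma$-orbits --- the final step is to certify the result by the two independent checks announced in the introduction: the mass formula for the orders of the cell stabilizers, forced by the vanishing of the virtual Euler characteristic of $\SL_N(\Z)$ (\S\ref{massformula}), and the relation $d_{n-1}\circ d_n = 0$ in $\Vor_\Gamma$, a nontrivial identity once the differentials are written as explicit matrices. Repeating the computation on several processor architectures provides a further consistency check. I expect the main obstacle to be exactly this scale: arranging the face enumeration so that the intermediate shortlists $\mathcal F_h^{p+1}$ remain small enough to store and process, and making the repeated equivalence and stabilizer computations fast enough, is what renders the rank-7 case feasible, whereas each individual mathematical step is elementary.
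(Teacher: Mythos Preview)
Your proposal is correct and follows essentially the same approach as the paper: the proposition is stated without a separate proof, being simply the outcome of the computer enumeration described in \S\ref{equiv_cells} and \S4.2, together with the consistency checks (mass formula and $d_{n-1}\circ d_n=0$) mentioned in the introduction and \S\ref{massformula}. Your summary of the algorithm and its certification matches the paper's method step for step.
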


\begin{rem}
The first line in Figure \ref{fig:sig56} has already been computed by Batut (cf. \cite{batut}, p.409, second column of Table~2).
\end{rem}

\subsection{ The differential }\label{ssec3.4}

The next step is to compute the differentials of the  Vorono\"{\i} complex
by using formula \eqref{eq1} above.
In Table \ref{tab:gl6}, we give information on the differentials in the Vorono\"{\i}
complex of rank 5. For instance the second line, denoted $d_{11}$, is about the differential from
$V_{11}$ to $V_{10}$. In the bases $\Sigma_{11}$ and $\Sigma_{10}$, this differential is given by a matrix
$A$ with $\Omega=513$ non-zero entries, with $m=46= \rm{card}(\Sigma_{10})$ rows and $n=163=\rm{card}(\Sigma_{11})$
columns. The rank of $A$ is 42, and the rank of its kernel is 121. The elementary divisors of $A$ are $1$ (multiplicity $40$) and $2$ (multiplicity $2$).

The cases of $\SL_4(\Z)$,  $GL_6(\Z)$ and  $\SL_6(\Z)$ are treated in Table  \ref{tab:sl4},  Table  \ref{tab:gl6} and  Table \ref{tab:sl6}, respectively. \\

Our results on the  differentials in rank $7$ are
shown in Table \ref{tab:gl7}. While the matrices are sparse, they are not sparse enough for efficient computation. They have a poor conditioning with some dense columns or rows (this is a consequence of the fact that the complex is not simplicial and non-simplicial cells can have a large number of non-trivial intersections with the faces).
We have obtained  full information on
the rank of the differentials. For the computation of the elementary divisors 
complete results have been obtained in the case of matrices of $d_n$ for 
$10\leq n\leq 14$ and $24\leq n\leq 27$
only.  See \cite{pasco2007} for a detailed description of the computation.\\

\begin{table*}\footnotesize
\begin{center}
\begin{tabular}{|r|r|r|r|r|r|r|}
\hline
\graycell{$A$} & \graycell{$\Omega$} & \graycell{$n$} & \graycell{$m$} & \graycell{rank} & \graycell{ker} & \graycell{elementary divisors} \\
\hline
\hline
$d_4$& 0 & 1& 0   &0  &1 & \\\hline
$d_5$& 1&  1 &1   &1  &0 & 1(1) \\\hline
$d_6$&0 &  1 &1   &0  &1 & \\\hline
$d_7$& 0& 0&  1&  0   &0 & \\\hline
$d_8$& 0& 1&  0&  0   &1 & \\\hline
$d_9$& 2& 2&  1&  1   &1 & 2(1)\\\hline
\end{tabular}
\medskip
\caption{Results on the rank and elementary divisors of the differentials
 for $\SL_4(\Z)$.} 
 \label{tab:sl4}
\end{center}
\end{table*}

\begin{table*}\footnotesize
\begin{center}
\begin{tabular}{|r|r|r|r|r|r|r|}
\hline 
\graycell{$A$} & \graycell{$\Omega$} & \graycell{$n$} & \graycell{$m$} & \graycell{rank} & \graycell{ker} & \graycell{elementary divisors} \\
\hline
\hline
$d_8$& 0 & 1& 0   &0  &1 & \\\hline
$d_9$& 2&  7 &1   &1  &6 & 1(1)\\\hline
$d_{10}$&18 &6 &7   &5  &1 & 1(4), 2(1)\\\hline
$d_{11}$& 5& 1& 6&  1   &0 & 1(1) \\\hline
$d_{12}$& 0& 0& 1&  0   &0 & \\\hline
$d_{13}$& 0& 2& 0&  0   &2 & \\\hline
$d_{14}$& 4& 3& 2&  2   &1 & 5(1), 15(1)\\\hline
\end{tabular}
\medskip
\caption{Results on the rank and elementary divisors of the differentials 
 for $GL_5(\Z)\,$.} 
\label{tab:gl5}
\end{center}
\end{table*}

\begin{table*}\footnotesize
\begin{center}
\begin{tabular}{|r|r|r|r|r|r|r|}
\hline 
\graycell{$A$} & \graycell{$\Omega$} & \graycell{$n$} & \graycell{$m$} & \graycell{rank} & \graycell{ker} & \graycell{elementary divisors} \\
\hline
\hline
$d_{10}$&17 &46& 3& 3 &43& 1(3)\\\hline
$d_{11}$& 513&163& 46& 42 &121& 1(40), 2(2)\\\hline
$d_{12}$&  2053&340& 163& 120& 220& 1(120)\\\hline
$d_{13}$& 4349&544 &340 &220 &324 &1(217), 2(3)\\\hline
$d_{14}$&  6153&636 &544 &324 &312& 1(320), 2(1), 6(2), 12(1)\\\hline
$d_{15}$& 5378&469 &636 &312 &157& 1(307), 2(3), 60(2) \\\hline
$d_{16}$&  2526&200 &469 &156 &44& 1(156)\\\hline
$d_{17}$& 597&49 &200 &44 &5 &1(41), 3(1), 6(1), 36(1)\\\hline
$d_{18}$& 43&5 &49 &5 &0&  1(5)\\\hline
\end{tabular}
\medskip
\caption{Results on the rank and elementary divisors of the differentials for $GL_6(\Z)\,$.}
\label{tab:gl6}
\end{center}
\end{table*}
\begin{table*}\footnotesize
\begin{center}
\begin{tabular}{|r|r|r|r|r|r|r|}
\hline 
\graycell{$A$} & \graycell{$\Omega$} & \graycell{$n$} & \graycell{$m$} & \graycell{rank} & \graycell{ker} & \graycell{elementary divisors} \\
\hline
\hline
$d_7$& 12&10&3  &3 &7& 1(3) \\\hline
$d_8$& 48&18&10  &7 & 11& 1(7)\\\hline
$d_9$&  140&43&18  &11 & 32 &1(11)\\\hline
$d_{10}$&  613&169 &43 &32 &  137 &  1(32)\\\hline
$d_{11}$&  2952&460 &169 &136 & 324& 1(129), 2(6), 6(1)\\\hline
$d_{12}$& 7614&815 &460 &323 &  492& 1(318), 2(3), 4(2)\\\hline
$d_{13}$&  12395&1132 &815 &491 &  641& 1(491)\\\hline
$d_{14}$& 14966&1270 &1132 &641 &  629& 1(637), 3(3), 12(1)\\\hline
$d_{15}$&  12714&970 &1270 &629 & 341 & 1(621), 2(5), 6(1), 60(2)\\\hline
$d_{16}$&  6491&434 &970 &339 & 95 & 1(338), 2(1)\\\hline
$d_{17}$&  1832&114 &434 &95 & 19 &  1(92), 3(2), 18(1)\\\hline
$d_{18}$&  257&27 &114 &19 & 8& 1(17), 2(2)\\ \hline
$d_{19}$&  62&14 &27 &8 & 6& 1(7), 10(1)\\\hline
$d_{20}$&  28& 7& 14 & 6& 1 & 1(1), 3(4), 504(1)\\\hline
\end{tabular}
\medskip
\caption{Results on the rank and elementary divisors of the differentials for $\SL_6(\Z)\,$.}\label{tab:sl6}
\end{center}
\end{table*}

\begin{table*}\footnotesize
\begin{center}
\begin{tabular}{|r|r|r|r|r|r|r|}
\hline 
\graycell{$A$} & \graycell{$\Omega$} & \graycell{$n$} & \graycell{$m$} & \graycell{rank} & \graycell{ker} & \graycell{elementary divisors} \\
\hline
\hline
$d_{10}$& 8& 60& 1& 1& 59& 1 \\\hline
$d_{11}$& 1513&1019& 60& 59& 960& 1 (59) \\\hline
$d_{12}$& 37519&8899& 1019& 960& 7939& 1 (958), 2 (2)\\\hline
$d_{13}$& 356232&47271& 8899& 7938& 39333& 1 (7937), 2 (1)\\\hline
$d_{14}$& 1831183&171375& 47271& 39332& 132043& 1 (39300), 2 (29), 4 (3)\\\hline
$d_{15}$& 6080381&460261& 171375& 132043& 328218&    ? 
\\\hline
$d_{16}$& 14488881&955128& 460261& 328218& 626910& ? \\\hline
$d_{17}$& 25978098&1548650& 955128& 626910& 921740& ? \\\hline
$d_{18}$& 35590540&1955309& 1548650&921740 &1033569 & ? \\\hline
$d_{19}$& 37322725&1911130& 1955309&1033568 &877562 & ? \\\hline
$d_{20}$& 29893084&1437547& 1911130& 877562& 559985& ? \\\hline
$d_{21}$& 18174775&822922& 1437547& 559985& 262937& ?  \\\hline
$d_{22}$& 8251000&349443& 822922& 262937& 86506& ?\\\hline
$d_{23}$& 2695430&105054& 349443& 86505& 18549& ?
\\\hline
$d_{24}$& 593892& 21074& 105054& 18549& 2525& 1 (18544), 2 (4), 4 (1)\\\hline
$d_{25}$& 81671&2798& 21074& 2525& 273& 1 (2507), 2 (18)\\\hline
$d_{26}$& 7412&305& 2798& 273& 32& 1 (258), 2 (7), 6 (7), 36 (1)\\\hline
$d_{27}$& 600& 33&305&32&1& 1 (23), 2 (4),  28 (3),  168 (1), 2016 (1)
\\\hline
\end{tabular}
\medskip
\caption{Results on the rank and elementary divisors of the differentials for $GL_7(\Z)\,$.
}\label{tab:gl7}
\end{center}
\end{table*}
\subsection{The homology of the Vorono\"{i} complexes}
 From the computation of the differentials,  we can  determine the  homology of Vorono\"{\i} complex. Recall that if we have a 
complex of free abelian groups
\[
\cdots \to \Z^\al \overset{f}{\to} \Z^\be \overset{g}{\to} \Z^\ga \to \cdots 
\]
with $f$ and $g$ represented by matrices, then
the homology is  
\[
           \ker (g) / \im (f ) \iso \Z/d_1 \Z \oplus \cdots  \oplus \Z/d_\ell \Z \oplus \Z^{\be-\rank(f)-\rank(g)} ,
\]
where $d_1,\dots,d_\ell$ are the elementary divisors of the matrix of $f$.

We deduce from Tables 1--5 the following result on the homology of the Vorono\"{\i} complex.
\begin{thm}\label{homology_voronoi} 
 The non-trivial homology of the Vorono\"{\i} complexes associated to $GL_N(\Z)$ with $N=5,6$ modulo $\mathcal{S}_{5}$ is given by:
\begin{align*}
 H_n(\Vor_{GL_5(\Z)}) &\iso \Z, \quad \text{ if \ }n=9, 14\,,\\
 H_n(\Vor_{GL_6(\Z)}) &\iso \Z, \quad \text{ if \ }n=10,11,15\, ,
 \end{align*}
while in the case
$\SL_6(\Z)$ we get, modulo $\cS_7$, that
\begin{align*}
 H_n(\Vor_{\SL_6(\Z)}) &\iso   \begin{cases} \Z,\quad \text{ if \ }n=10, 11, 12, 20\, ,\\                          \Z^2,\quad \text{ if \ }n=15\, .
                            \end{cases}
 \end{align*}
Furthermore, for $N=7$ we get 
\begin{equation*}
H_n(\Vor_{GL_7(\Z)}\otimes \Q) \iso   \begin{cases}  \Q & \text{ if \ }n=12,13,18,22,27\,,\\ 0& \text{otherwise}.\end{cases}
\end{equation*}

\end{thm}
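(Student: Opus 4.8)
The plan is to read the homology off directly from the explicit Vorono\"\i\ complex produced in \S4, applying the elementary divisor formula recalled just before the statement. For each relevant degree $n$ the complex $\Vor_\Gamma$ is the bounded complex of free abelian groups
\[
\cdots \to V_{n+1}\overset{d_{n+1}}{\to}V_n\overset{d_n}{\to}V_{n-1}\to\cdots,
\]
where $V_n$ is free of rank $|\Sigma_n|$, tabulated in Figure~\ref{fig:sig56} for $N=5,6$ and in Figure~\ref{fig:sig7} for $N=7$, and where $d_n$ is given in the chosen bases $\Sigma_n,\Sigma_{n-1}$ by the matrix whose rank and elementary divisors are recorded in Tables~\ref{tab:gl5}--\ref{tab:gl7}. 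By the displayed formula, $H_n(\Vor_\Gamma)$ is the direct sum of a free group of rank $|\Sigma_n|-\rank(d_n)-\rank(d_{n+1})$ with the torsion group $\bigoplus_i\Z/d_i\Z$, the $d_i$ being the elementary divisors of $d_{n+1}$; at the ends of the complex one simply omits the missing differential.

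First I would go through the tables degree by degree for $\Vor_{GL_5(\Z)}$: the quantity $|\Sigma_n|-\rank(d_n)-\rank(d_{n+1})$ equals $1$ exactly for $n=9$ and $n=14$ and $0$ otherwise, while the only elementary divisors occurring (the entries $2,5,15$ in Table~\ref{tab:gl5}) have prime factors $\le5$, hence die modulo $\mathcal S_5$; this gives the first displayed isomorphism. The same bookkeeping with Table~\ref{tab:gl6} yields free rank $1$ exactly in degrees $n=10,11,15$, and all elementary divisors there ($2,3,6,12,36,60$) again have prime factors $\le5$. For $\Vor_{\SL_6(\Z)}$, reading Table~\ref{tab:sl6} gives free rank $1$ in degrees $n=10,11,12,20$, free rank $2$ in degree $n=15$, and $0$ elsewhere; the elementary divisors that appear ($2,3,4,6,10,12,18,36,60,504,2016,\dots$) all have prime factors $\le7$, so they vanish modulo $\mathcal S_7$. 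This establishes the three integral statements.

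For $N=7$ the elementary divisors of $d_n$ are known only for $10\le n\le14$ and $24\le n\le27$, but this is immaterial after tensoring with $\Q$, where $\dim_\Q H_n(\Vor_{GL_7(\Z)}\otimes\Q)=|\Sigma_n|-\rank(d_n)-\rank(d_{n+1})$; the complete rank column of Table~\ref{tab:gl7} together with Figure~\ref{fig:sig7} shows this number is $1$ for $n=12,13,18,22,27$ and $0$ otherwise. Carrying out these subtractions is routine arithmetic; the only points requiring care are the indexing --- the torsion of $H_n$ is controlled by $d_{n+1}$, not $d_n$ --- and the two boundary degrees where one of the differentials is absent. There is no genuine mathematical obstacle in this final step: the real work is the production of the differential matrices and the computation of their ranks and elementary divisors, which is precisely the content of \S4, and whose reliability rests on the two independent internal checks recorded there, namely the identity $d_{n-1}\circ d_n=0$ and the mass formula for the orders of the cell stabilizers.
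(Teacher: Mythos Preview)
Your proposal is correct and is exactly the approach taken in the paper: the theorem is obtained by reading off the ranks and elementary divisors from Tables~\ref{tab:gl5}--\ref{tab:gl7} and Figures~\ref{fig:sig56}--\ref{fig:sig7} via the elementary divisor formula recalled just before the statement. The only (harmless) slip is in your list of elementary divisors for $\SL_6(\Z)$, where $36$ and $2016$ do not actually occur in Table~\ref{tab:sl6}; the divisors that do occur there are $1,2,3,4,6,10,12,18,60,504$, all with prime factors $\le 7$, so your conclusion is unaffected.
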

Notice that, if $N$ is odd, $\SL_N(\Z)$ and $GL_N(\Z)$ have the same homology modulo ${\mathcal S}_2$.
Notice also that, for simplicity, in the statement of the theorem we did not use the full information given by the list of 
elementary divisors in Tables 1--5.

\subsection{Mass formulae for  the Vorono\"{\i} complex}\label{massformula}
Let $\chi(\SL_N(\Z))$ be the virtual Euler characteristic of
the group $\SL_N(\Z)$. 
It can be computed in two ways.
First, the mass formula in \cite{B} gives
{
\[  
\chi(\SL_N(\Z))=\sum_{\sigma\in E} (-1)^{{\rm dim}(\sigma)} \frac{1}{|\Gamma_\sigma|} = \sum_{n=N}^{d(N)} (-1)^n 
\sum_{\sigma\in \Sigma_n^\star}\frac 1{|\Gamma_\sigma|}\, ,
\]}\noindent
where $E$ is a family of representatives of the cells of the Vorono\"{\i} complex 
 of rank $N$ modulo the action of 
$\SL_N(\Z)$, and $\Gamma_\sigma$ is the stabilizer of 
$\sigma$ in $\SL_N(\Z)$.
Second, by a result of Harder \cite{harder}, we know that 
{
\[
\chi(\SL_N(\Z))=\prod_{k=2}^N \zeta(1-k)\, , 
\]}\noindent
hence $\chi(\SL_N(\Z))=0$ if $N\ge 3$.\\

A non-trivial check of our computations is to test the compatibility of  these two
formulas, and the corresponding check for rank $N=5$ had been performed by
Batut (cf.~\cite{batut}, where a proof of an analogous statement, for any $N$, but instead pertaining
to {\em well-rounded} forms, which in our case are precisely the ones in $\Sigma_\bullet^\star$, is attributed to Bavard \cite{Bavard}).

If we add together the terms $ \frac{1}{|\Gamma_\sigma|} $ for cells $\sigma$ of the same dimension to a single term,
then we get for $N=6$, starting with the top dimension, 

 {\small
\begin{gather*}
\frac{45047}{1451520}
-\frac{10633}{11520}
+\frac{6425}{576}
-\frac{12541}{192}\\
+\frac{7438673}{34560}
-\frac{3841271}{8640}
+\frac{9238}{15}
-\frac{266865}{448}
+\frac{14205227}{34560}
-\frac{14081573}{69120}\\
+\frac{830183}{11520}
-\frac{205189}{11520}
+\frac{61213}{20736}
-\frac{1169}{3840}
+\frac{17}{1008}
-\frac{1}{2880}\\
=\chi(\SL_6(\Z))=0\, .\phantom{\big|}
\end{gather*}
}\noindent

For $N=7$ we obtain similarly

{\smallskip	
\begin{gather*}
-\dfrac{290879}{107520}+\dfrac{13994381}{103680}-\dfrac{31815503}{13824}+\dfrac{1362329683}{69120}-\dfrac{6986939119}{69120}\\
+\dfrac{7902421301}{23040}-\dfrac{340039739981}{414720}+\dfrac{174175928729}{120960}-\dfrac{132108094091}{69120}\\
+\dfrac{27016703389}{13824}-\dfrac{13463035571}{8640}+\dfrac{14977461287}{15360}-\dfrac{22103821919}{46080}\\
+\dfrac{8522164169}{46080}-\dfrac{17886026827}{322560}+\dfrac{1764066533}{138240}-\dfrac{101908213}{46080}+\dfrac{12961451}{46080}\\
-\dfrac{10538393}{414720}+\dfrac{162617}{103680}-\dfrac{721}{11520}+\dfrac{43}{32256}\\
=\chi\left(\SL_7(\mathbb{Z})\right)=0\,.\phantom{\big|}
\end{gather*}
}\noindent

\section{Explicit homology classes} 
\subsection{Equivariant fundamental classes}
\begin{thm}\label{explicithomclass}
The top homology group $H_{d(N)}\big(\Vor_{\SL_N(\Z)}\otimes \Q\big)$
has dimension~1. When $N=4$, $5$, $6\,$ or $\,7$, it is represented by the cycle

$$\sum_\sigma \frac{1}{|\Gamma_\sigma|} [\sigma]\,,$$
where $\sigma$ runs through the perfect forms of rank $N$ and the orientation of each cell
is inherited from the one of $X_N/\Gamma$.
\end{thm}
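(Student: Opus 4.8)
The plan is to prove the two assertions in turn: that the top homology is one-dimensional, and that the displayed chain is a nonzero cycle. The second point suffices to exhibit a generator, because $\Vor_{\SL_N(\Z)}$ has no cells in degree $>d(N)$, so $H_{d(N)}(\Vor_{\SL_N(\Z)})=\ker d_{d(N)}$ and every nonzero cycle in that degree is automatically a nonzero homology class.

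For the dimension I would argue rationally. Over $\Q$ the spectral sequence of \S\ref{ssec2.2} has $E^1_{pq}\otimes\Q=\bigoplus_\sigma H_q(\Gamma_\sigma,\Z_\sigma\otimes\Q)$, which vanishes for $q>0$ since each $\Gamma_\sigma$ is finite; hence it degenerates and $H_m(\Vor_\Gamma\otimes\Q)\cong H^\Gamma_m(X_N^*,\partial X_N^*;\Q)$ for all $m$. Combining this with the identification $H^\Gamma_m(X_N^*,\partial X_N^*;\Z)\cong H_{m-N+1}(\Gamma,\St_N)$ of \S\ref{ssec2.4} and with Borel--Serre duality \cite{BS}, which rationally identifies $H_{\nu-i}(\SL_N(\Z),\St_N)$ with $H^i(\SL_N(\Z),\Q)$, where $\nu=N(N-1)/2$ is the virtual cohomological dimension, and using the numerical identity $d(N)-N+1=\nu$, one gets
\[
H_{d(N)}\big(\Vor_{\SL_N(\Z)}\otimes\Q\big)\;\cong\;H^0(\SL_N(\Z),\Q)\;=\;\Q\, .
\]
For $N\le 7$ this is also visible directly in Tables~1--5, where $\operatorname{rank} d_{d(N)}$ equals the number of perfect forms of rank $N$ minus one.

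Next I would pin down the orientations. The space $X_N\cong\SL_N(\R)/\mathrm{SO}(N)$ is a smooth oriented manifold of dimension $d(N)$ on which $\SL_N(\Z)$ acts by orientation-preserving diffeomorphisms (because $\SL_N(\R)$ is connected); fixing such an orientation, which then descends to $X_N/\Gamma$, is what ``the orientation inherited from $X_N/\Gamma$'' means. For $\Gamma=\SL_N(\Z)$ any $\gamma\in\Gamma_{\sigma(h)}$ acts on $\R(\sigma(h))\subseteq\mathrm{Sym}^2(\R^N)$ with determinant $\det(\gamma)^{N+1}=1$, so the orientation module $\Z_{\sigma(h)}$ is trivial, every top cell lies in $\Sigma_{d(N)}$, and $\Sigma_{d(N)}$ is a set of representatives for the perfect forms $\mathcal P$. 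Orienting each $\R(\sigma(h))$ by the ambient orientation of $X_N$, the claim is that
\[
c\;=\;\sum_{h\in\mathcal P}\frac1{|\Gamma_{\sigma(h)}|}\,[\sigma(h)]
\]
satisfies $d_{d(N)}(c)=0$ in $V_{d(N)-1}\otimes\Q$, the point being that $c$ is the image of the fundamental class of the oriented manifold $X_N$ under the natural map to the Vorono\"i complex. Concretely: for $N\ge3$ no codimension-one cell of $X_N^*$ lies on $\partial X_N^*$ --- a codimension-one face of a top cell spans a subspace of $\mathrm{Sym}^2(\R^N)$ of dimension $d(N)$, whereas if its minimal vectors spanned a proper subspace of $\Q^N$ that dimension would be at most $\binom N2<d(N)$ --- so, $X_N$ being a manifold, each such cell $\tau$ is an interior wall shared by exactly two top cells $S_1,S_2$ of $X_N$, whose incidences $\iota_1,\iota_2$ with $\tau$ (relative to the fixed orientation) satisfy $\iota_1+\iota_2=0$. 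I would then compute the coefficient of a representative $\tau\in\Sigma_{d(N)-1}$ in $d_{d(N)}(c)$ from \eqref{eq1}: using that every element of $\Gamma_\tau$ preserves the orientation of $\tau$ and hence cannot interchange $S_1$ and $S_2$, one shows that for each face $\tau'$ of a top cell $\sigma(h)$ with $\tau'\sim\tau$ the sign $\eta(\tau,\tau')\,\varepsilon(\tau',\sigma(h))$ equals $\iota_i$ for the corresponding side $i$, that the number of such faces on a given side equals $[\Gamma_{\sigma(h)}:\Gamma_{\sigma(h)\cap\tau'}]=|\Gamma_{\sigma(h)}|/|\Gamma_\tau|$, and therefore --- the transfer factor $[\Gamma_{\sigma(h)}:\Gamma_{\sigma(h)\cap\tau'}]$ being exactly what is built into \eqref{eq1} --- that the coefficient of $\tau$ in $d_{d(N)}(c)$ equals $(\iota_1+\iota_2)/|\Gamma_\tau|=0$.

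The main obstacle is precisely this last bookkeeping: tracking the stabilizer indices, the face multiplicities and the orientation signs carefully enough to see that the $\Gamma$-equivariant differential \eqref{eq1}, applied to the weighted chain $c$, collapses to the purely local relation $\iota_1+\iota_2=0$ --- equivalently, to make precise that $c$ is the image of $[X_N]$ under the chain map from the $\Gamma$-equivariant Borel--Moore complex of $X_N$ to $\Vor_\Gamma\otimes\Q$. Granting $d_{d(N)}(c)=0$, the argument concludes at once: $c$ is a nonzero chain (all coefficients are nonzero), there are no $d(N)$-boundaries, so $c$ represents a nonzero class in the one-dimensional space $H_{d(N)}(\Vor_{\SL_N(\Z)}\otimes\Q)$ and hence spans it; and for $N=4,5,6,7$ the perfect forms are completely classified (\cite{Vo}, \cite{barnes}, \cite{jaquet}), so $c$ is the explicit finite cycle stated in the theorem.
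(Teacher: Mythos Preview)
Your argument for the first assertion is exactly the paper's. For the second assertion the two proofs diverge in style. The paper writes the top differential as an $n_1\times n_0$ matrix $A$ and asserts (``it can be checked'') three facts: each row of $A$ has exactly two nonzero entries, the absolute value of the $(i,j)$-entry equals $|\Gamma_{\sigma_j}|/|\Gamma_{\tau_i}|$, and after suitable column sign changes each row has one positive and one negative entry; the cycle then follows by inspection. You instead \emph{derive} these facts from the geometry: $X_N$ is an oriented manifold on which $\SL_N(\Z)$ acts preserving orientation, no codimension-one cell lies on $\partial X_N^*$ (your dimension count), so each such wall has exactly two top neighbours with opposite incidence; and for $\tau\in\Sigma_{d(N)-1}$ the stabiliser $\Gamma_\tau$ cannot swap the two sides, hence $\Gamma_\tau\subset\Gamma_{S_\pm}$, which gives the face count $|\Gamma_{\sigma(h)}|/|\Gamma_\tau|$ and forces the two contributions to cancel in $d_{d(N)}(c)$. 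This is precisely the conceptual explanation behind the paper's matrix observations, and it works for every $N$, not only $4\le N\le 7$; the paper's formulation, on the other hand, ties directly to the explicit differential matrices computed in Tables~1--5. One notational point: what you write as $\Gamma_{\sigma(h)\cap\tau'}$ should be $\Gamma_{\sigma(h)}\cap\Gamma_{\tau'}$, and the identification $|\Gamma_{\sigma(h)}\cap\Gamma_{\tau'}|=|\Gamma_\tau|$ is exactly the inclusion $\Gamma_{\tau'}\subset\Gamma_{\sigma(h)}$ that your no-swap observation yields.
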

\begin{proof}
The first assertion is clear since, by \eqref{eq3} above and \eqref{BorelSerre} below we have
$$H_{d(N)}\big(\Vor_{\SL_N(\Z)}\otimes \Q\big) \iso H_{d(N)-N+1}\big(\SL_N(\Z), \St_N\otimes \Q\big) \iso H^0(\SL_N(\Z),\Q)\iso \Q\,.$$

In order to prove the second claim, write the differential between codimension~0 and codimension~1 cells as a matrix $A$ of size
$n_1\times n_0$, with $n_i=|\Sigma_{d(N)-i}(\Gamma)|$ denoting the number of codimension $i$ cells in the
Vorono\"{\i} cell complex. It can be checked that in each of the $n_1$ rows of $A$ there are precisely two non-zero
entries. Moreover, the absolute value of the $(i,j)$-th entry of $A$ is equal to the quotient $|\Gamma_{\sigma_j}|/|\Gamma_{\tau_i}|$ (an {\em integer}), where $\sigma_j\in \Sigma_0(\Gamma)$ and $\tau_i\in \Sigma_1(\Gamma)$.
Finally, one can multiply some columns by $-1$ (which amounts to changing the orientation of the corresponding codimension 0 cell) in such a way that each row has exactly one positive
and one negative entry. \end{proof}

\begin{example} For $N=5$ the differential
matrix $d_{14}$ (cf. Table \ref{tab:gl5}) between codimension~0 and codimension~1 is given by 
$$\left(\begin{matrix} 40&0&-15\\40&-15&0 \end{matrix}\right)\,,$$
so the kernel is generated by $(3,8,8)=11520\,\big(\frac1{3840},\frac{1}{1440},\frac{1}{1440}\big)$,
while the orders of the three automorphism groups are $3840$, 1440 and 1440, respectively. 
\end{example}

\begin{example}
Similarly, the differential $d_{20}:V_{20}\to V_{19}$ for rank $N=6$ (cf.~Table \ref{tab:gl6}) is 
represented by the matrix
$$\left(\begin{matrix} 
0& 0& 96& 0& 0& 0& -21\\
 3240& 0& 0& 0& -21& 0& 0\\
  0& 0& 1440& 0& 0& -3& 0 \\
  0& 0& 0& 18& 0& -6& 0 \\
  -12960& 0& 0& 0& 0& 12& 0 \\
  -3240& 0& 0& 9& 0& 0& 0\\
   0& -360& 0& 1& 0& 0& 0\\
    -4320& 0& 0& 12& 0& 0& 0\\
     0& 0& 960& -6& 0& 0& 0 \\
     0& -216& 96& 0& 0& 0& 0 \\
     -45& 45& 0& 0& 0& 0& 0\\
 -2592& 0& 1152& 0& 0& 0& 0 \\
 -3240& 0& 1440& 0& 0& 0& 0 \\
 -432& 0& 192& 0& 0& 0& 0
 \end{matrix}\right)\,.$$
Its kernel is generated by
$$(28,28,63,10080,4320,30240,288)$$
while the orders of the corresponding automorphism groups are, respectively, 
$$103680, 103680, 46080, 288, 672, 96, 10080\,,$$
and we note that $28\cdot 103680 = 63\cdot 46080 = 10080\cdot  288= 4320\cdot 672 = 30240\cdot 96$.
\end{example}
\subsection{An explicit non-trivial homology class
for rank $N=5$}\label{explicitclass}
The kernel of the $6\times 7$-matrix of $d_{10}$ for $\GL_5(\Z)$, displayed in the proof of Theorem \ref{splittingthm}, equation \eqref{diff10}
below, is spanned by $(0,0,0,0,0,1,-1)$ together with 
$(5,1,-8,16,15,2,2)$. The latter one provides a non-trivial homology class  in $H_{10}\big(\Vor_{GL_5(\Z)}\big)\iso H^{5}({GL_5(\Z)},\Z)$ (modulo $\cS_5$), given as a linear combination of cells (in terms of 
minimal vector indices) as follows:

\begin{eqnarray*}
&5&\varphi \big([1, 15, 4, 16, 10, 11, 17, 18, 3, 5]\big) \\
&+ &\varphi\big( [1, 15, 4, 16, 10, 11, 17, 18, 2, 5]\big)\\
&-8&\varphi \big( [1, 15, 4, 10, 11, 17, 18, 3, 2, 5]\big)\\
&+16& \varphi \big([1, 6, 15, 4, 16, 10, 11, 17, 18, 2]\big)\\
&+15&\varphi\big(  [1, 6, 15, 4, 16, 10, 11, 17, 18, 5]\big)\\
&+2 &\varphi \big([1, 6, 7, 4, 16, 10, 11, 17, 2, 5]\big)\\
&+2 &\varphi \big([1, 6, 7, 19, 13, 20, 15, 10, 11, 2]\big)\,.
\end{eqnarray*}

Here the indices refer to the following order for the set $m(P_5^1)$ of minimal vectors

\setcounter{MaxMatrixCols}{20}
{\tiny 
\begin{gather*}
\begin{matrix}
1&2&3&4&5&6&7&8&9&10&11&12&13&14&15&16&17&18&19&20\\
\hline\\
1&0&1&0&1&1&0&0&0&1&0&0&1&1&0&0&1&0&0&0\cr
0&1&1&0&0&-1&1&1&0&0&1&0&0&0&0&1&0&0&1&0\cr
0&0&0&-1&0&0&0&0&0&0&1&-1&-1&0&0&0&1&1&-1&1\cr
0&0&0&-1&0&0&0&-1&1&1&0&0&0&-1&-1&1&0&0&0&-1\cr
0&-1&-1&1&-1&0&0&0&-1&-1&-1&0&0&0&0&-1&-1&-1&0&0\cr
\end{matrix}
\end{gather*}
}

\noindent
and $\varphi(u)$ for a vector $u$ of indices is the convex hull
of the minimal vectors corresponding to those indices,  as in \S\ref{ssec1.2}.

\section{Splitting off the Vorono\"{\i} complex $\Vor_N$ from $\Vor_{N+1}$ for small $N$}
In this section, we will be concerned with $\Gamma=GL_N(\Z)$ only and we adapt the
notation $\Sigma_n(N)= \Sigma_n(GL_N(\Z))$  for the sets of representatives.

\subsection{Inflating well-rounded forms}\label{inherit}
Let $A$ be the symmetric matrix attached to a form $h$ in $C_N^*$.
Suppose the cell associated to $A$ is {\em well-rounded}, i.e., its set of minimal vectors $S=S(A)$ spans the underlying vector space $\R^N$. Then we can associate to it a form $\tilde{h}$ with matrix $\wtA = 
 \begin{pmatrix} A&0\\ 0&m(A) \end{pmatrix}$ in $C_{N+1}^*$, where $m(A)$ denotes the minimum positive value of  $A$ on $\Z^N$. The set $\wtS$ of minimal vectors of $\wtA$ contains the ones from $S$, each vector being extended by an $(N+1)$-th coordinate 0. Furthermore, $\wtS$ contains the additional minimal vectors $\pm e_{N+1} =\pm
 (0,\dots,0,1)$, and hence it spans $\R^{N+1}$, i.e., $\wtA$ is well-rounded as well.
In the following, we will call forms like $\wtA$ as well as their associated cells {\em inflated}.

The stabilizer of  $h$ in $GL_N(\Z)$  thereby embeds into the one of $\tilde{h}$ inside
$\GL_{N+1}(\Z)$ (at least modulo $\pm {\rm Id}$) under the usual stabilization map.

Note that, by iterating the same argument $r$ times, $A$ induces a well-rounded form also in 
$\Sigma_\bullet^\star(N+r)$ which, 
for $r\geq 2$, does not belong to $\Sigma_\bullet(N+r)$
since there is an
obvious orientation-reversing automorphism of the inflated form, given by the permutation which 
swaps the last two coordinates.

\subsection{The case $N=5$} \label{case5}

\begin{thm}\label{splittingthm}
The complex $\Vor_{GL_5(\Z)}$ is isomorphic to a direct factor of $\Vor_{GL_6(\Z)}$, with degrees shifted by 1.
\end{thm}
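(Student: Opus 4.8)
The plan is to realize $\Vor_{GL_5(\Z)}$, with degrees raised by one, as the subcomplex of $\Vor_{GL_6(\Z)}$ spanned by the \emph{inflated} cells of \S\ref{inherit}, and to produce a complementary subcomplex. For a well-rounded form $h$ of rank $5$ with matrix $A$, recall its inflation $\tilde h$ with matrix $\wtA=\begin{pmatrix}A&0\\0&m(A)\end{pmatrix}$; since $m(\tilde h)=\{(v,0):v\in m(h)\}\cup\{\pm e_6\}$ and the form $\widehat{e_6}=E_{66}$ is linearly independent from the forms $\widehat{(v,0)}$, the cell $\sigma(\tilde h)$ is well-rounded of dimension $\dim\sigma(h)+1$. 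This assignment descends to a map $\iota\colon\Sigma_n(5)\to\Sigma_{n+1}^\star(6)$; the first point is to check that $\iota$ is injective on $\Ga$-orbits and that its image lies in $\Sigma_{n+1}(6)$, i.e.\ that no inflated cell has an orientation-reversing automorphism --- for a single inflation the only threat, the transposition of the last two coordinates, is absent (in contrast with the $r\ge2$ case noted in \S\ref{inherit}), and this is confirmed against the stabilizers computed in \S\ref{ssec3.4}. Let $W_\bullet\subset\Vor_{GL_6(\Z)}$ be the span of the inflated cells and $W'_\bullet$ the span of the remaining generators.

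Next I would show that $W_\bullet$ and $W'_\bullet$ are both subcomplexes, so that $\Vor_{GL_6(\Z)}=W_\bullet\oplus W'_\bullet$ as complexes. For $W_\bullet$ the argument is geometric: if $\tau'$ is a codimension-one face of $\sigma(\tilde h)$ meeting $X_6$, then the linear form $M\mapsto M_{66}$ on $\R(\sigma(\tilde h))$ is seen to cut out a supporting hyperplane whose face contains $\pm e_6$ among its minimal vectors, and hence $\tau'$ is the inflation of a (necessarily well-rounded) codimension-one face of $\sigma(h)$; thus $d(W_\bullet)\subset W_\bullet$. For $W'_\bullet$ one needs the converse --- a non-inflated well-rounded cell of $X_6^*$ has no inflated codimension-one face --- which does not follow formally; here one invokes the explicit incidence lists underlying \S\ref{ssec3.4} to see that inflated cells are incident only with inflated cells, giving $d(W'_\bullet)\subset W'_\bullet$.

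Finally I would verify that $\iota$ induces a chain isomorphism $\Vor_{GL_5(\Z)}[1]\xrightarrow{\sim}W_\bullet$. It is bijective on generators, and compatibility with the differential \eqref{eq1} reduces to matching the signs $\varepsilon$ and $\eta$ of \S\ref{ssec2.1} in ranks $5$ and $6$. If one orients each inflated cell by appending $\widehat{e_6}$ to the chosen orientation of the corresponding rank-$5$ cell, then a positive basis of $\R(\widetilde{\tau_0})$, extended inside $\R(\tilde\sigma)$ first by some $\widehat{(v,0)}$ and then by $\widehat{e_6}$, is positive exactly when the parallel basis of $\R(\sigma)$ is, so the $\varepsilon$'s agree; and for $\eta$ one uses that $g\in GL_5(\Z)$ with $\tau_0'=\tau_0\cdot g$ inflates to $g\oplus1\in GL_6(\Z)$ with $\widetilde{\tau_0'}=\widetilde{\tau_0}\cdot(g\oplus1)$, together with the fact --- part of the stabilizer computation --- that $\Ga_{\sigma(\tilde h)}$ is generated by $\Ga_{\sigma(h)}\oplus1$ and $-\Id_6$, the latter acting trivially on every $\R(\sigma)$, so that orientation-compatibility is inherited from rank $5$. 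Combining these, $\Vor_{GL_6(\Z)}\iso W_\bullet\oplus W'_\bullet$ with $W_\bullet\iso\Vor_{GL_5(\Z)}$ shifted by one. I expect the main obstacle to be the converse incidence claim of the second paragraph --- that no non-inflated cell acquires an inflated face --- for which there seems to be no soft argument and which must be read off the explicit cell data of \S\ref{ssec3.4} (as recorded, for the low-codimension part, in Tables \ref{tab:gl5} and \ref{tab:gl6}); the bookkeeping that $\iota$ is injective and lands in $\Sigma_\bullet(6)$ rather than merely $\Sigma_\bullet^\star(6)$, and that the stabilizers grow only by the harmless factor $\{\pm\Id_6\}$, is the other place where the computer data is essential.
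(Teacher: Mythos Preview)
Your proposal is correct and follows essentially the same strategy as the paper: embed $\Vor_{GL_5(\Z)}$ into $\Vor_{GL_6(\Z)}$ via the inflation construction of \S\ref{inherit}, and verify from the explicit cell data of \S4 that the image splits off as a direct factor. The organization differs slightly. The paper first displays the incidence graph of $\Vor_{GL_5(\Z)}$ (two connected components), then records from the computation that the incidence graph of $\Vor_{GL_6(\Z)}$ has three connected components, two of which are isomorphic as weighted level graphs to the rank-5 graph; the direct-factor statement is then immediate from the component decomposition, and the identification with inflation is exhibited afterwards by an explicit $\gamma\in GL_6(\Z)$. You instead define the splitting $W_\bullet\oplus W'_\bullet$ a priori via inflated versus non-inflated cells and then check it is a splitting of complexes, giving a clean pyramid argument for $d(W_\bullet)\subset W_\bullet$ that the paper does not spell out. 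Both routes ultimately rest on the same machine verifications (injectivity of $\iota$, landing in $\Sigma_\bullet(6)$ rather than $\Sigma_\bullet^\star(6)$, no incidences between inflated and non-inflated cells, and stabilizers growing only by $\pm\Id_6$); the paper's connected-component formulation has the small advantage of making the sign compatibility automatic once the weighted graphs are matched, whereas you carry out the $\varepsilon$/$\eta$ bookkeeping by hand.
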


\begin{proof} The Vorono\"{\i} complex of $\GL_5(\Z)$ can be represented by the following 
weighted graph with levels
\vskip 20pt
\hbox{\hskip 0pt$\xymatrix@=15pt{0:&&&&&
*+<3pt>[F-,]{P_{5}^1 }\ar@{-}[dr]_{-15}&&
*+<3pt>[F-,]{P_{5}^2 }\ar@{-}[dl]^{40}\ar@{-}[dr]_{40}&&
*+<3pt>[F-,]{P_{5}^3 }\ar@{-}[dl]^{-15}\\
1:& &&&&&
*+<5pt>[F-,]{\sigma_1^1 }&&
*+<5pt>[F-,]{\sigma_1^2} &}$
}

\vskip 30pt

\hbox{\hskip -10pt$\xymatrix@=15pt{3:&&&&&&&
*+<3pt>[F-,]{\sigma_{3}^{1} }\ar@{-}[ddllllll]_{-1}\ar@{-}[ddllll]^{1}\ar@{-}[ddll]^{-2}\ar@{-}[dd]^{-2}\ar@{-}[ddrr]^{1}\\ \\ 
4: &
*+<3pt>[F-,]{\sigma_{4}^{1} }\ar@{-}[dd]_{}\ar@{-}[ddrrrr]_{}\ar@{-}[ddrrrrrrrr]_{}  
&&
*+<3pt>[F-,]{\sigma_{4}^{2} }\ar@{-}[dd]_{}\ar@{-}[ddrrrr]_{{}}\ar@{-}[ddrrrrrr]_{{}}  
&&
*+<3pt>[F-,]{\sigma_{4}^{3} }\ar@{-}[ddllll]_{}\ar@{-}[ddll]_{}\ar@{-}[dd]_{}  
&&
*+<3pt>[F-,]{\sigma_{4}^{4} }\ar@{-}[dd]\ar@{-}[ddll]_{}   
&&
*+<3pt>[F-,]{\sigma_{4}^{5} }\ar@{-}[dd]^{} \ar@{-}[ddllll]_{} \ar@{-}[ddllllll]_{} \ar@{-}[ddllllllll]_{}  
&&&
*+<3pt>[F-,]{\sigma_{4}^{6} }\ar@{-}[ddlllllllll]_{} \ar@{-}[ddl]^{-1}\ar@{-}[ddr]^{-1}   
\\  \\5: &
*+<3pt>[F-,]{\sigma_{5}^{1} }
&&
*+<3pt>[F-,]{\sigma_{5}^{2} }
&&
*+<3pt>[F-,]{\sigma_{5}^{3} }
&&
*+<3pt>[F-,]{\sigma_{5}^{4} }
&&
*+<3pt>[F-,]{\sigma_{5}^{5} }
&&
*+<3pt>[F-,]{\sigma_{5}^{6} }\ar@{-}[ddr]^{1}  
&&
*+<3pt>[F-,]{\sigma_{5}^{7} }\ar@{-}[ddl]^{-1}   
\\ \\
6:& &&&&
 &&&& &&&
*+<5pt>[F-,]{\sigma_{6}^{1} }&&& }$
}

\vskip 30pt

Here the nodes in line $j$ (marked on the left)  represent the elements in $\Sigma_{d(N)-j}(5)$, i.e.~we have 3, 2, 0, 1, 6, 7 and 1 cells in codimensions 0, 1, 2, 3, 4, 5 and 6, respectively, and arrows show incidences of those cells, while numbers attached to arrows give the corresponding incidence multiplicities. Since entering the multiplicities relating codimensions 4 and 5 would make the graph rather unwieldy, we give them instead in terms of the matrix corresponding to the differential $d_{10}$ 
connecting dimension 10 to 9 
(columns
refer, in this order, to  $\sigma_5^1,\dots,\sigma_5^7$, while rows refer to  $\sigma_4^1,\dots,\sigma_4^6$)
\begin{equation}\label{diff10}
\begin{pmatrix}
-5&0&-5&0&-1&0&0\\
0&-2&0&2&-2&0&0\\
2&-2&1&0&0&0&0\\
0&0&2&1&0&0&0\\
-1&-2&1&0&1&0&0\\
0&4&0&0&0&-1&-1
\end{pmatrix}\,.
\end{equation}

As is apparent from the picture, there are two connected components in that graph. The corresponding graph for $\GL_6(\Z)$ has three connected components, two of which are "isomorphic" (as weighted graphs with levels) to the one above for $\GL_5(\Z)$, except for a shift in codimension by 5 \big(e.g. codimension 0 cells in $\Sigma_\bullet(5)$
correspond to codimension~5 cells in $\Sigma_\bullet(6)$\big), i.e.~a shift in dimension by~1.

\medskip

In fact, it is possible, after appropriate coordinate transformations, to identify the minimal vectors (viewed up to sign)
of any given cell in the two inflated components of $\Sigma_\bullet{(6)}$ alluded to above with the minimal vectors of another cell which is inflated from one in $\Sigma_\bullet{(5)}$, except precisely {\em one} minimal vector (up to sign) which is {\em fixed} under the stabilizer of the cell.

\medskip
Let us illustrate this correspondence for the top-dimensional cell $\sigma$ of the
perfect form $P_{5}^{1}\in \Sigma_{14}(5)$, also denoted $P(5,1)$ in \cite{jaquet} and  $D_5$ in \cite{LS}, with the  
list $m(P_5^1)$ of minimal vectors given already at the end of \S\ref{explicitclass}.

Using the algorithm described in \S\ref{equiv_cells}, the corresponding inflated cell $\widetilde\sigma$ in $\Sigma_{15}{(6)}$ can be found to be, in terms of its 21 minimal vectors of the perfect form $P_6^1$ 
in Jaquet's notation (see \cite{jaquet} and \S\ref{explicitclass} for the full list $m(P_6^1)$),

\setcounter{MaxMatrixCols}{21}
{\tiny
\begin{gather*}
\begin{matrix}
v_1&v_2&v_4&v_5&v_{10}&v_{12}&v_{13}&v_{14}&v_{16}&v_{17}&v_{18}&v_{22}&v_{24}&v_{25}&v_{26}
&v_{27}&v_{29}&v_{33}&v_{34}&v_{35}&v_{36}\\
\hline\\
1&-1&0&-1&0&0&0&-1&1&1&0&1&1&0&1&0&0&1&0&0&1\cr
0&1&-1&0&0&0&-1&0&1&0&1&1&0&1&0&1&0&0&1&0&1\cr
0&0&1&1&0&-1&0&0&-1&0&0&-1&0&0&-1&-1&0&-1&-1&0&-1\cr
0&0&0&0&1&0&0&0&-1&-1&-1&0&0&0&-1&-1&-1&0&0&0&-1\cr
0&0&0&0&0&1&1&1&-1&-1&-1&-1&-1&-1&0&0&0&0&0&0&-1\cr
0&0&0&0&0&0&0&0&1&1&1&1&1&1&1&1&1&1&1&1&2\cr
\end{matrix}
\end{gather*}
}\noindent

The transformation
$$\gamma = \begin{pmatrix} 
0&-1&-1&0&0&0\\
0&0&-1&0&-1&-1\\
0&0&0&1&0&1\\
0&0&0&1&0&0\\
0&0&1&-1&0&0\\
-1&-1&-1&0&-1&0
\end{pmatrix}$$
sends $v_1$ to $(0,0,0,0,0,1)$ and sends each of the other vectors to the corresponding one of the form $(v,0)$ where $v$ is  the corresponding minimal
vector for $P_5^1$ (in the order given above).

One can verify that the other two perfect forms $P_{5}^{2}$ and
$P_{5}^{3}$ (denoted by Vorono\"{i} $A_5$ and $\varphi_2 $, respectively) give rise to a corresponding inflated cell in $\Sigma_{15}{(6)}$ in a similar way.

Concerning  the cells of {\em positive} codimension in  $\Sigma_\bullet{(5)}$, it turns out that these all have a representative which is a facet in $\sigma$. Furthermore, the matrix $\gamma$ induces an isomorphism from the subcomplex of $\Sigma_\bullet(6)$ spanned by $\widetilde\sigma$ and all its facets to the complex obtained by inflation, as in \S\ref{inherit} above, from the complex spanned by $\sigma_5$ and all its facets. Finally, one can verify that the cells attached to $P_5^2$ and $P_5^3$ are conjugate, after inflation, to cells in $\Sigma_{15}(6)$, and that the differentials for $\Vor_{\GL_5}$ and   $\Vor_{\GL_6}$ agree on these.  This ends the proof of the theorem.  \end{proof}  

\medskip
\subsection{Other cases}
\label{case3}
A similar situation holds for $\Sigma_\bullet{(3)}$ and $\Sigma_\bullet{(4)}$, but as $\Sigma_\bullet{(3)}$ consists of 
a single cell only, the picture is far less significant.

For $N=4$, there is only one cell leftover in $\Sigma_\bullet(4)$, in fact in $\Sigma_6(4)$, and it is already
inflated from $\Sigma_5(3)$, as shown in \S\ref{case3}. Hence its image in $\Sigma^\star_7(5)$ will allow an orientation reversing
automorphism and hence will not show up in $\Sigma_7(5)$. This illustrates the remark at the end of
\ref{inherit}.

Finally, for $N=6$, the cells in the third component of the incidence graph for $\GL_6(\Z)$ mentioned in the proof of Theorem \ref{splittingthm} above appear, in inflated form,
in the Vorono\"{\i} complex for $\GL_7(\Z)$ which inherits the homology of that component, 
since in the weighted graph of $GL_7(\Z)$, which is connected, there is only one incidence of an 
inflated cell with a non-inflated one.
Therefore we do not have a splitting in this case.

\section{The Cohomology of modular groups}\label{sec4}
\subsection{Preliminaries}
Recall the following simple fact:

\begin{lem}\label{lemma1}
Assume that $p$ is a prime and $g \in {\rm GL}_N ({\mathbb R})$ has order $p$. Then $p \leq N+1$.
\end{lem}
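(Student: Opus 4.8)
The plan is to reduce the claim to a statement about the characteristic polynomial of $g$ over $\Q$. If $g \in \GL_N(\R)$ has order exactly $p$, then $g$ satisfies $g^p = \Id$ but $g \ne \Id$, so its minimal polynomial over $\Q$ divides $X^p - 1 = (X-1)\Phi_p(X)$, where $\Phi_p(X) = 1 + X + \cdots + X^{p-1}$ is the $p$-th cyclotomic polynomial. Since $g \ne \Id$, the minimal polynomial is not equal to $X - 1$, hence it is divisible by $\Phi_p(X)$ (as $\Phi_p$ is irreducible over $\Q$ and is the only other monic irreducible factor of $X^p-1$). First I would record this divisibility, which is the crux of the argument.

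Next, since $\Phi_p$ divides the minimal polynomial of $g$, which in turn divides the characteristic polynomial of $g$ (a degree-$N$ polynomial, as $g$ acts on $\R^N$), we get $\deg \Phi_p = p - 1 \le N$, i.e. $p \le N + 1$. The only subtlety to address is the case $p = 2$: then $\Phi_2(X) = X + 1$, the bound reads $2 \le N+1$, i.e. $N \ge 1$, which holds trivially since $g \in \GL_N(\R)$ forces $N \ge 1$. So no separate treatment is really needed, but it is worth a remark that the argument uses only $g \ne \Id$.

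The main (and essentially only) obstacle is justifying that $\Phi_p$ is irreducible over $\Q$ and that $X^p - 1 = (X-1)\Phi_p(X)$ is its factorization into irreducibles; this is the classical irreducibility of cyclotomic polynomials of prime index, provable by Eisenstein's criterion applied to $\Phi_p(X+1)$. Granting that, the proof is immediate: $g$ has finite order, so it is diagonalizable over $\C$ with eigenvalues $p$-th roots of unity; since $g\neq\Id$ some eigenvalue is a primitive $p$-th root of unity $\zeta$, and then all Galois conjugates of $\zeta$ — there are $p-1$ of them — must also occur as eigenvalues of $g$ because the characteristic polynomial has rational (indeed integer, if one wishes) coefficients. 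Hence $p - 1 \le N$, which is the assertion. I would present the argument in this eigenvalue form, as it is the shortest and makes the role of the hypothesis $g \ne \Id$ transparent.
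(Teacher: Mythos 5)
Your proof takes the same approach as the paper's: both arguments rest on the observation that the $p$-th cyclotomic polynomial $\Phi_p(X) = 1 + X + \cdots + X^{p-1}$, of degree $p-1$, must divide the characteristic polynomial of $g$, a polynomial of degree $N$, whence $p-1 \le N$. You are in fact slightly more careful than the paper on one point: the paper asserts that the minimal polynomial of $g$ \emph{is} $\Phi_p$, which fails if $1$ is also an eigenvalue of $g$ (in which case the minimal polynomial is $X^p - 1 = (X-1)\Phi_p(X)$); your formulation, that $\Phi_p$ \emph{divides} the minimal polynomial because it is the only irreducible factor of $X^p-1$ other than $X-1$, is the correct way to put it. One remark applies to both arguments: the irreducibility of $\Phi_p$ over $\Q$ (equivalently, your appeal to the full Galois orbit of a primitive $p$-th root of unity appearing among the eigenvalues) requires the characteristic polynomial of $g$ to have \emph{rational} coefficients. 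This is automatic for $g \in \GL_N(\Q)$ but not for arbitrary $g \in \GL_N(\R)$, and indeed the statement as written is false over $\R$: a rotation of $\R^2$ by angle $2\pi/p$ has order $p$ for every prime $p$. You tacitly made the rationality assumption yourself when you wrote that the characteristic polynomial ``has rational (indeed integer, if one wishes) coefficients,'' and in the paper's application $g$ ranges over stabilizers contained in $\GL_N(\Z)$, so no harm is done, but strictly speaking the hypothesis should be $g \in \GL_N(\Q)$.
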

\begin{proof} The minimal polynomial of $g$ is the cyclotomic polynomial $x^{p-1} + x^{p-2} + \cdots + 1$. By the Cayley-Hamilton theorem, this polynomial divides the characteristic polynomial of $g$. Therefore $p-1 \leq N$. \end{proof}

We shall also need the following result:

\begin{lem}\label{lemma2}
The action of ${\rm GL}_N ({\mathbb R})$ on the symmetric space $X_N$ preserves its orientation if and only if $N$ is odd.
\end{lem}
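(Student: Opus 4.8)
The plan is to reduce the orientation question on the symmetric space $X_N$ to a sign computation on the tangent space at a single point and, more precisely, to the action of $\GL_N(\R)$ on a suitable top exterior power. Recall that $X_N$ is the quotient of $C_N$ (the open cone of positive definite symmetric $N \times N$ real matrices) by positive homotheties; it is a homogeneous space under $\GL_N(\R)$, and the orientability/orientation-preservation issue is governed entirely by how the stabilizer of a basepoint acts on the tangent space there. First I would take the basepoint to be the identity form $h_0 = \Id_N$. Its stabilizer in $\GL_N(\R)$ under $h \mapsto \gamma^t h \gamma$ is the orthogonal group $O(N)$. The tangent space to $C_N$ at $\Id_N$ is canonically the space $\mathrm{Sym}_N(\R)$ of all symmetric matrices, of dimension $N(N+1)/2$, and the tangent space to $X_N$ is the quotient by the line $\R\cdot\Id_N$ coming from the homothety direction, so it has dimension $d(N) = N(N+1)/2 - 1$.

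Next I would make the $\GL_N(\R)$-action on these tangent spaces explicit. An element $\gamma \in \GL_N(\R)$ acts on $\mathrm{Sym}_N(\R)$ by $S \mapsto \gamma^t S \gamma$; this is the symmetric-square representation $\mathrm{Sym}^2$ of the standard representation, up to the transpose-inverse twist, but for computing a determinant/sign only the absolute structure matters. The key linear-algebra fact is that the determinant of the operator $S \mapsto \gamma^t S \gamma = A^t S A$ on $\mathrm{Sym}_N(\R)$ (writing $A=\gamma$) equals $(\det A)^{N+1}$. One clean way to see this: the map $S \mapsto A^tSA$ is the composite $S\mapsto A^tS$ and $T\mapsto TA$; on the full matrix space $M_N(\R)$ (dimension $N^2$) left multiplication by $A^t$ has determinant $(\det A)^N$ and right multiplication by $A$ has determinant $(\det A)^N$, and then one restricts compatibly to the symmetric subspace, or alternatively one diagonalizes/triangulates $A$ and reads off the eigenvalues of the induced action on $\mathrm{Sym}_N$, obtaining $\prod_{i\le j}\lambda_i\lambda_j = (\prod_i \lambda_i)^{N+1}$, i.e.\ $(\det A)^{N+1}$. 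Since the homothety line $\R\cdot\Id$ is $\GL_N(\R)$-stable and $\gamma$ acts on it by the scalar $\det(\gamma^t\gamma)^{1/N}>0$, hence with positive determinant, the induced action on the quotient tangent space $T_{h_0}X_N$ has determinant of the same sign as $(\det\gamma)^{N+1}$, namely $\mathrm{sign}(\det\gamma)^{N+1} = \mathrm{sign}(\det\gamma)^{N+1}$; this is $+1$ for all $\gamma$ precisely when $N+1$ is even, i.e.\ when $N$ is odd, and equals $\mathrm{sign}(\det\gamma)$ when $N$ is even.

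To finish I would argue that orientation-preservation of the global action on the connected manifold $X_N$ is detected by this single sign at $h_0$: $X_N$ is connected (it is a quotient of the convex cone $C_N$), so an automorphism either preserves or reverses a chosen orientation globally, and since the action is smooth and transitive, the global sign equals the sign of the derivative at any one point, e.g.\ the determinant computed above at $h_0$. Hence when $N$ is odd every $\gamma\in\GL_N(\R)$ preserves orientation, while when $N$ is even any $\gamma$ with $\det\gamma<0$ reverses it (and those with $\det\gamma>0$ preserve it). The main obstacle — really the only substantive point — is the determinant computation $\det\big(S\mapsto \gamma^tS\gamma\big) = (\det\gamma)^{N+1}$ on $\mathrm{Sym}_N(\R)$; everything else is formal. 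I would carry that out by triangulating $\gamma$ over $\C$ (or using density of diagonalizable matrices together with continuity of $\det$), computing the eigenvalues $\lambda_i\lambda_j$ ($i\le j$) of the induced action on $\mathrm{Sym}_N$, and multiplying them to get $(\det\gamma)^{N+1}$, then passing to the quotient by the manifestly orientation-preserving homothety line.
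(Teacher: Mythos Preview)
Your argument is essentially correct and follows the same strategy as the paper---reduce orientation-preservation on $X_N$ to a determinant sign on the tangent space---but the execution differs. The paper reduces immediately to the single element $\varepsilon=\mathrm{diag}(-1,1,\dots,1)$ (since $\GL_N(\R)^+$ is the identity component) and then counts, in the explicit basis $\delta_{ij}+\delta_{ji}$ of trace-zero symmetric matrices, that exactly $N-1$ basis vectors get negated by $S\mapsto \varepsilon^t S\varepsilon$; hence the sign is $(-1)^{N-1}$. You instead compute the determinant of $S\mapsto \gamma^t S\gamma$ on all of $\mathrm{Sym}_N(\R)$ for arbitrary $\gamma$ via the eigenvalue product $\prod_{i\le j}\lambda_i\lambda_j=(\det\gamma)^{N+1}$, and then pass to the quotient. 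Your route is a bit more conceptual and yields the character $\gamma\mapsto \mathrm{sign}(\det\gamma)^{N+1}$ in one stroke; the paper's route is more elementary and avoids the eigenvalue bookkeeping.

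One genuine imprecision to fix: the line $\R\cdot\Id$ is \emph{not} $\GL_N(\R)$-stable in $\mathrm{Sym}_N(\R)$, since $\gamma^t\,\Id\,\gamma=\gamma^t\gamma$ is not a scalar matrix in general, so the sentence ``$\gamma$ acts on it by the scalar $\det(\gamma^t\gamma)^{1/N}$'' is not right as stated. Two clean repairs: (i) restrict the tangent computation to the stabilizer $O(N)$ of $\Id$, where $\gamma^t\gamma=\Id$ and the line is trivially fixed, so the determinant on the quotient equals $(\det\gamma)^{N+1}$; this suffices because every $\gamma\in\GL_N(\R)$ differs from some element of $O(N)$ by an element of $\GL_N(\R)^+$; or (ii) argue globally on the principal $\R_{>0}$-bundle $C_N\to X_N$: the $\gamma$-action sends the radial direction at $h$ to the radial direction at $\gamma^t h\gamma$ with positive coefficient, so orientation on $C_N$ (governed by the sign of $(\det\gamma)^{N+1}$) and on $X_N$ are preserved or reversed together. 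Either fix completes your proof.
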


\begin{proof} The subgroup $ {\rm GL}_N ({\mathbb R})^+ \subset  {\rm GL}_N ({\mathbb R})$ of elements with positive determinant is the connected component of the identity, therefore it preserves the orientation of $X_N$. Any $g \in  {\rm GL}_N ({\mathbb R})$ which is not in $ {\rm GL}_N ({\mathbb R})^+$ is the product of an element of ${\rm GL}_N ({\mathbb R})^+$ with the diagonal matrix $\varepsilon = {\rm diag} \, (-1,1,\ldots , 1)$, so we just need to check when $\varepsilon$ preserves the orientation of $X_N$. The tangent space $TX_N$ of $X_N$ at the origin consists of real symmetric matrices $m = (m_{ij})$ of trace zero. The action of $\varepsilon$ is given by $m \cdot \varepsilon = \varepsilon^t \, m \, \varepsilon$ (cf. \S\ref{ssec1.1}) and we get
$$
(m \cdot \varepsilon)_{ij} = m_{ij}
$$
unless $i=1$ or $j=1$ and $i \ne j$, in which case $(m \cdot \varepsilon)_{ij} = -m_{ij}$. Let $\delta_{ij}$ be the matrix with entry $1$ in row $i$ and column $j$, and zero elsewhere. A basis of $TX_N$ consists of the matrices $\delta_{ij} + \delta_{ji}$, $i \ne j$, together with $N-1$ diagonal matrices. For this basis, the action of $\varepsilon$ maps $N-1$ vectors $v$ to their opposite $-v$ and fixes the other ones. The lemma follows. \end{proof}

\subsection{Borel--Serre duality}\label{Farrellcohomology}

According to Borel and Serre (\cite{BS}, Thm.~11.4.4 and Thm.~11.5.1), the group $\Gamma=\SL_N(\Z)$ or $GL_N(\Z)$ is a virtual duality group with dualizing module
$$
H^{v(N)} (\Gamma , {\mathbb Z} [\Gamma]) = {\rm St}_N \otimes \tilde{\mathbb Z} \, ,
$$
where $v(N) = N(N-1)/2$ is the virtual cohomological dimension of $\Gamma$ and $\tilde{\mathbb Z}$ is the orientation module of $X_N$. It follows that there is a long exact sequence
\begin{equation}\label{lesbos}
\cdots \to H_n (\Gamma , {\rm St}_N) \to H^{v(N)-n} (\Gamma , \tilde{\mathbb Z}) \to \hat H^{v(N)-n} (\Gamma , \tilde{\mathbb Z}) \to H_{n-1} (\Gamma , {\rm St}_N) \to \cdots
\end{equation}
where $\hat H^*$ is the Farrell cohomology of $\Gamma$ \cite{FA}. From Lemma \ref{lemma1} and the Brown spectral sequence (\cite{B}, X (4.1)) we deduce that $\hat H^* (\Gamma , \tilde{\mathbb Z})$ lies in ${\mathcal S}_{N+1}$. Therefore
\begin{equation}\label{BorelSerre}
H_n (\Gamma , {\rm St}_N) \equiv  H^{v(N)-n} (\Gamma , \tilde{\mathbb Z}) \, , \ \mbox{modulo ${\mathcal S}_{N+1}$.}
\end{equation}
When $N$ is odd, then ${\rm GL}_N ({\mathbb Z})$ is the product of ${\rm SL}_N ({\mathbb Z})$ by ${\mathbb Z} / 2$, therefore
$$
H^m ({\rm GL}_N ({\mathbb Z}) , {\mathbb Z}) \equiv H^m ({\rm SL}_N ({\mathbb Z}) , {\mathbb Z}) \, , \ \mbox{modulo ${\mathcal S}_2$.}
$$
When $N$ is even, then the action of ${\rm GL}_N ({\mathbb Z})$ on $\tilde{\mathbb Z}$ is given by the sign of the determinant (see Lemma \ref{lemma2}) and Shapiro's lemma gives
\begin{equation}\label{Shapiro}
H^m ({\rm SL}_N ({\mathbb Z}) , {\mathbb Z}) = H^m ({\rm GL}_N ({\mathbb Z}) , M) \, ,
\end{equation}
with 
$$
M = {\rm Ind}_{{\rm SL}_N ({\mathbb Z})}^{{\rm GL}_N ({\mathbb Z})} \, {\mathbb Z} \equiv {\mathbb Z} \oplus \tilde {\mathbb Z} \, , \ \mbox{modulo ${\mathcal S}_2$.}
$$
To summarize: when $\Gamma=\SL_N(\Z)$ or $GL_N(\Z)$, where $N\leq 7$, we know $H^m(\Gamma,\tilde{\mathbb Z})$ by 
combining \eqref{eq3} (end of \S\ref{ssec2.4}), Theorem \ref{homology_voronoi} and \eqref{BorelSerre}. This allows us to compute the 
cohomology of $GL_N(\Z)$. The results are given in Theorem \ref{cohomology} below.

\subsection{The cohomology of modular groups}
\begin{thm}\label{cohomology}
 \begin{itemize}
\item[(i)] Modulo ${\mathcal S}_5$ we have
$$
H^m ({\SL}_5 (\Z) , \Z) = \begin{cases} \Z & {\rm if} \quad m = 0, 5,\ \\
0 & {\rm otherwise}.
\end{cases}
$$
\item[(ii)] Modulo ${\mathcal S}_7$ we have
$$
H^m ({GL}_6 (\Z) , \Z) = \begin{cases} \Z &{\rm if }\quad m= 0, 5, 8,\\
0 & {\rm otherwise},
\end{cases}
$$
and
$$
H^m ({\SL}_6 (\Z) , \Z) = \begin{cases} \Z^2 &{\rm if }\quad m=5,\\
\Z &{\rm if }\quad m=0, 8, 9, 10,\ \\
0 & {\rm otherwise}.
\end{cases}
$$
\item[(iii)] For $N=7$, we have,
$$
H^m ({\SL}_7 (\Z) , \Q) = \begin{cases} \Q &{\rm if }\quad m=0, 5, 11, 14, 15,\\
0 & {\rm otherwise}.
\end{cases}
$$
\end{itemize}
\end{thm}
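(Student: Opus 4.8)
The plan is to assemble Theorem~\ref{cohomology} mechanically from the three ingredients already prepared in the excerpt: the identification \eqref{eq3} of $H_m(\Vor_\Gamma)$ with $H_{m-N+1}(\Gamma,\St_N)$ modulo $\mathcal{S}_{N+1}$, the explicit homology of the Vorono\"i complexes in Theorem~\ref{homology_voronoi}, and the Borel--Serre duality statement \eqref{BorelSerre}, namely $H_n(\Gamma,\St_N)\equiv H^{v(N)-n}(\Gamma,\tilde{\mathbb Z})$ modulo $\mathcal{S}_{N+1}$. Chaining these gives, modulo $\mathcal{S}_{N+1}$, an identification $H^{v(N)-n}(\Gamma,\tilde{\mathbb Z})\equiv H_{m}(\Vor_\Gamma)$ where $m=n+N-1$, i.e. $H^{j}(\Gamma,\tilde{\mathbb Z})\equiv H_{v(N)+N-1-j}(\Vor_\Gamma)=H_{d(N)-j}(\Vor_\Gamma)$ since $d(N)=N(N+1)/2-1=v(N)+N-1$. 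So the whole computation is just reindexing the homology degrees of the Vorono\"i complex by $j\mapsto d(N)-j$ and then passing from $\tilde{\mathbb Z}$-coefficients to $\mathbb Z$-coefficients.

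First I would treat the odd cases $N=5$ and $N=7$. Here $\tilde{\mathbb Z}$ is the trivial module by Lemma~\ref{lemma2} (the $\GL_N(\mathbb R)$-action preserves orientation), so $H^j(\Gamma,\tilde{\mathbb Z})=H^j(\Gamma,\mathbb Z)$, and moreover $\SL_N(\mathbb Z)$ and $\GL_N(\mathbb Z)$ have the same cohomology modulo $\mathcal{S}_2$. For $N=5$: $d(5)=14$, $v(5)=10$, and Theorem~\ref{homology_voronoi} gives $H_n(\Vor_{GL_5(\mathbb Z)})\iso\mathbb Z$ for $n=9,14$ modulo $\mathcal{S}_5$; reindexing via $m=14-j$ yields nonzero cohomology exactly at $j=14-14=0$ and $j=14-9=5$, which is assertion (i). For $N=7$: $d(7)=27$, and rational homology is $\mathbb Q$ at $n=12,13,18,22,27$; reindexing $j=27-n$ gives $j=0,5,9,14,15$ --- and here I must reconcile with the stated $m=0,5,11,14,15$. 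The discrepancy at the middle slot will require care: one expects an additional contribution to $H^*(\SL_7(\mathbb Z),\mathbb Q)$ not visible in the Vorono\"i complex of $\GL_7(\mathbb Z)$, presumably the $\SL_7$ versus $\GL_7$ distinction on the $\tilde{\mathbb Z}$-twist combined with the long exact sequence \eqref{lesbos}, or a reflection of the stable class in $H^5$. I would check the reindexing against the known virtual cohomological dimension $v(7)=21$ and the constraint that $H^j$ vanishes for $j>v(7)$, and re-derive the degrees carefully; this bookkeeping is the step most likely to need revision.

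Next the even case $N=6$. Now $\tilde{\mathbb Z}$ is \emph{not} trivial for $\GL_6(\mathbb Z)$: by Lemma~\ref{lemma2} the action is via $\det$. For $\GL_6(\mathbb Z)$ directly: $d(6)=20$, and $H_n(\Vor_{GL_6(\mathbb Z)})\iso\mathbb Z$ for $n=10,11,15$ modulo $\mathcal{S}_5$; reindexing $j=20-n$ gives $H^j(\GL_6(\mathbb Z),\tilde{\mathbb Z})$ nonzero at $j=5,9,10$. To get ordinary cohomology $H^m(\GL_6(\mathbb Z),\mathbb Z)$ I would instead run the same argument with the trivial module: but the Vorono\"i complex as set up computes the $\tilde{\mathbb Z}$-twisted version (through the orientation modules $\mathbb Z_\sigma$), so I need the complex computing $H^m(\GL_6(\mathbb Z),\mathbb Z)=H_{20-m}(\GL_6(\mathbb Z),\St_6)$ --- which is where the $\SL_6$ data enters via the Shapiro-type relation \eqref{Shapiro}: $H^m(\SL_6(\mathbb Z),\mathbb Z)=H^m(\GL_6(\mathbb Z),M)$ with $M\equiv\mathbb Z\oplus\tilde{\mathbb Z}$ mod $\mathcal{S}_2$. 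From the $\SL_6$ Vorono\"i homology ($\mathbb Z$ at $n=10,11,12,20$ and $\mathbb Z^2$ at $n=15$, mod $\mathcal{S}_7$) reindexed by $j=20-n$ one reads off $H^j(\SL_6(\mathbb Z),\tilde{\mathbb Z}\text{-twisted Steinberg})$, giving candidate degrees $j=0,5,8,9,10$ with a rank-$2$ slot at $j=5$; combined with the $\GL_6$ computation which isolates the $\tilde{\mathbb Z}$-part, one disentangles $M=\mathbb Z\oplus\tilde{\mathbb Z}$ to conclude $H^m(\GL_6(\mathbb Z),\mathbb Z)=\mathbb Z$ at $m=0,5,8$ and the $\SL_6$ values stated. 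Throughout I would keep track of which primes the error terms in $\mathcal{S}_{N+1}$ (here $\mathcal{S}_7$) can involve, coming from Farrell cohomology via Lemma~\ref{lemma1} and the Brown spectral sequence, so that the final ``modulo $\mathcal{S}_5$'' (for $N=5$) and ``modulo $\mathcal{S}_7$'' (for $N=6,7$) are justified. The main obstacle is precisely the combinatorial disentangling in the $N=6$ even case and the reindexing anomaly in the $N=7$ middle degree; everything else is transport of structure along the two established isomorphisms.
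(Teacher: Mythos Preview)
Your approach is exactly the paper's: chain \eqref{eq3}, Theorem~\ref{homology_voronoi}, and \eqref{BorelSerre} and reindex by $j\mapsto d(N)-j$. Two points of confusion deserve clearing up, though.

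First, your ``reindexing anomaly'' for $N=7$ is not an anomaly in your argument: your computation $27-\{12,13,18,22,27\}=\{15,14,9,5,0\}$ is correct, and the $11$ in the displayed statement is a typo for $9$. You can verify this directly from Table~\ref{tab:gl7}: at $n=16$ one has $|\Sigma_{16}|=955128=328218+626910=\mathrm{rank}(d_{16})+\mathrm{rank}(d_{17})$, so $H_{16}=0$ rationally and there is nothing in degree $27-16=11$; whereas $|\Sigma_{18}|-\mathrm{rank}(d_{18})-\mathrm{rank}(d_{19})=1955309-921740-1033568=1$ confirms the class in degree $9$. There is no extra input from \eqref{lesbos} or an $\SL$/$\GL$ distinction needed here: for odd $N$, $\tilde{\Z}=\Z$ and the two groups agree modulo $\mathcal{S}_2$.

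Second, your $N=6$ discussion is slightly tangled. The Vorono\"i complex does \emph{not} compute a $\tilde\Z$-twisted object; the orientation modules $\Z_\sigma$ of the cells are unrelated to the orientation module $\tilde\Z$ of $X_N$. What happens is that \eqref{BorelSerre} lands in $H^*(\Gamma,\tilde\Z)$, and for $\Gamma=\SL_6(\Z)$ the module $\tilde\Z$ is \emph{trivial} (since $\SL_N\subset\GL_N^+$). Hence the $\SL_6$ Vorono\"i complex gives $H^*(\SL_6(\Z),\Z)$ directly via $j=20-n$, yielding $\Z$ in degrees $0,8,9,10$ and $\Z^2$ in degree $5$, which is the stated result. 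For $\GL_6(\Z)$, its Vorono\"i complex gives $H^*(\GL_6(\Z),\tilde\Z)$ (nontrivial at $j=5,9,10$), and then \eqref{Shapiro} says $H^m(\SL_6(\Z),\Z)\equiv H^m(\GL_6(\Z),\Z)\oplus H^m(\GL_6(\Z),\tilde\Z)$ modulo $\mathcal{S}_2$; subtracting gives $H^m(\GL_6(\Z),\Z)=\Z$ exactly at $m=0,5,8$. So the ``disentangling'' is a one-line subtraction, not an obstacle.
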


\begin{rem} Morita asks in \cite{Morita} whether the class of infinite
order in $H^5(GL_5(\Z),\Z)$ survives in the cohomology
of the group of outer automorphisms of the free group
of rank five.
\end{rem}

\def \ft{\rm}


\begin{thebibliography}{60}

\bibitem{batut} {\bfseries Batut, C.;} {\em Classification of quintic eutactic forms,} {\ft Math. Comput.} {\bf 70}, no.233 (2001), 395--417. 
\bibitem{barnes} {\bfseries Barnes, E.S.;} {\em  The complete enumeration of extreme senary forms,} 
{\ft Phil. Trans. Roy. Soc. London} {\bf249-A} (1957), 461--506.
\bibitem{Bavard} {\bfseries Bavard, C.;} {\em  Classes minimales de r\'eseaux et r\'etractions g\'eom\'etriques \'equivariantes dans les espaces sym\'etriques}, J. London Math. Soc. (2) {\bf 64}, no. 2 (2001), 275--286.
\bibitem{B} {\bfseries Brown, K.;} {\em Cohomology of Groups,} Graduate Texts in Mathematics {\bf87}, Springer-Verlag, New York, 1994. 
\bibitem{BS} {\bfseries Borel, A.; Serre, J--P.;} {\em Corners and arithmetic groups,} 
Comment. Math. Helv.  {\bf48}  (1973), 436--491.
\bibitem{pasco2007} {\bfseries Dumas, J.--G.; Elbaz--Vincent, Ph.; Giorgi, P.; Urbanska, A.;} {\em Parallel Computation of the Rank of Large Sparse Matrices from Algebraic K-theory.},  PASCO 2007: Parallel Symbolic Computation '07, 26--27 July, Waterloo, Canada.  Proceedings of the ACM (2007), 43--52.
\bibitem{dsv} {\bfseries Dutour Sikiric, M.; Sch\"urmann, A.; Vallentin, F.;} {\em Classification of eight dimensional perfect forms,} {\ft Electron. Res. Announc. Amer. Math. Soc.} {\bf 13} (2007), 21--32.
\bibitem{FA} {\bfseries Farrell, F. T.;} {\em An extension of Tate cohomology to a class of infinite groups,}
{\ft J. Pure Appl. Algebra} {\bf 10}, no. 2 (1977/78), 153--161. 
\bibitem{GAP} {\bfseries The GAP Group;} GAP -- Groups, Algorithms, and Programming, Version {\tt 4.4.12}; 2008. (\url{http://www.gap-system.org}).
\bibitem{harder} {\bfseries Harder, G.;} {\em Die Kohomologie $S$-arithmetischer Gruppen \"uber Funktionenk\"orpern,} {\ft  Invent. Math.} {\bf 42}  (1977), 135--175. 
\bibitem{jaquet} {\bfseries Jaquet, D.-O.;} {\em \'Enum\'eration compl\`ete des classes de 
formes parfaites en dimension 7}, Th\`ese de doctorat, Universit\'e de Neuch\^atel
(1991).
\bibitem{LS}  {\bfseries Lee, R.; Szczarba, R. H.;} {\em On the torsion in $K_4(\Z)$ and $K_5(\Z)$}, {\ft Duke Math. J.} {\bf 45} (1978), 101--129.
\bibitem{martinet} {\bfseries Martinet, J.;} {\em  Perfect Lattices in Euclidean Spaces,}  Springer-Verlag, Grundlehren der Mathematischen Wissenschaften {\bf 327}, Heidelberg, 2003.
\bibitem{Morita}  {\bfseries Morita, S.;} {\em Cohomological structure of the mapping class group and beyond.} in: Problems on mapping class groups and related topics,
Proc. Sympos. Pure Math., {\bf 74}, Amer. Math. Soc., Providence, RI, 2006, 329--354.
 \bibitem{PARI2} {\bfseries The PARI--Group;} PARI/GP, versions {\tt 2.1--2.4}, Bordeaux, \url{http://pari.math.u-bordeaux.fr/}.
\bibitem{PFPK} {\bfseries PFPK:} A C library for computing Voronoï complexes, version {\tt 1.0.0}, 2009. 
\bibitem{souvignier} {\bf Plesken, W.; Souvignier, B.;} {Computing isometries 
of lattices}, {\ft J. Symb. Comput.} {\bf24} (1997), 327--334. 
\bibitem{Sch} {\bfseries Sch\"urmann, A.;} {\em Enumerating perfect forms.} arXiv:0901.1587 (math.NT).
\bibitem{Serre} {\bfseries Serre, J--P.;} {\em Trees}, Springer Monographs in Mathematics. Springer-Verlag, Berlin, 2003.
\bibitem{SouleK4} {\bfseries Soul\'e, C.;} {\em On the $3$-torsion in $K\sb 4(Z)$.} {\ft Topology} {\bf 39}, no.2, (2000),  259--265.
\bibitem{Soule-SL3} {\bfseries Soul\'e, C.;} {\em The cohomology of  $\SL\sb 3(Z)$.} {\ft Topology} {\bf 17}, no. 1 (1978), 1--22. 
\bibitem{Vo}{\bfseries  Vorono{\"\i}, G.:} {\em Nouvelles applications des param\`etres continus \`a la th\'eorie des formes quadratiques I}, {\ft Crelle} 133 (1907), 97--178.
\end{thebibliography}
\end{document}